\newtheorem{theorem}{Theorem}[section]
\newtheorem{corollary}{Corollary}
\newtheorem{maintheorem}{Theorem}
\newtheorem{lemma}[theorem]{Lemma}
\newtheorem{proposition}{Proposition}
\theoremstyle{definition}
\newtheorem{definition}[theorem]{Definition}
\newcommand\restr[2]{{ 
  \left.\kern-\nulldelimiterspace 
  #1 
  \vphantom{\big|} 
  \right|_{#2} 
  }}
\title[Rigidity for Anosov Endomorphisms on T2] 
      {Anosov Endomorphisms on the 2-torus:\\ Regularity of foliations and rigidity}
\subjclass{Primary: 37C15; Secondary: 37D20.}
 \keywords{Smooth dynamics, Hyperbolic dynamics, Non-invertible dynamics.}
\author{Marisa Cantarino}
\address{School of Mathematical Sciences, Monash University,\\ Clayton, VIC 3800, Australia.}
\email{\Letter \; marisa.cantarino@monash.edu }
\thanks{M. C. was partially financed by the Coordenação de Aperfeiçoamento de Pessoal de Nível Superior - Brasil (CAPES) - grant 88882.333632/2019-01 and the Fundação Carlos Chagas Filho de Amparo à Pesquisa of the State of Rio de Janeiro (FAPERJ) E-26/202.014/2022.}
\author{Régis Varão}
\address{Instituto de Matemática, Estatística e Computação Científica,\\
Universidade Estadual de Campinas - UNICAMP\\
Rua S\'ergio Buarque de Holanda, 651, Cidade Universitária\\
Campinas - SP, Brasil.}
\email{varao@unicamp.br}
\thanks{R. V. was partially financed by CNPq and Fapesp grants 18/13481-0 and 17/06463-3.}
\thanks{This is an author-created, un-copyedited version of an article accepted for publication in Nonlinearity.
The publisher is not responsible for any errors or omissions in this version of the manuscript or any version derived from it. The Version of Record is available online at \url{https://doi.org/10.1088/1361-6544/acf267}.}
\begin{document}

\begin{abstract}
 We provide sufficient conditions for smooth conjugacy between two Anosov endomorphisms on the 2-torus. From that, we also explore how the regularity of the stable and unstable foliations implies smooth conjugacy inside a class of endomorphisms including, for instance, the ones with constant Jacobian. As a consequence, we have in this class a characterization of smooth conjugacy between special Anosov endomorphisms (defined as those having only one unstable direction for each point) and their linearizations.
\end{abstract}

\maketitle

\section{Introduction} \label{sec:intro}

In this work we study rigidity results for Anosov endomorphisms: hyperbolic maps that are not necessarily invertible. The term \textit{rigidity} is associated with the idea that the value of an invariant or a specific property of the system determines its dynamics. In our case, we want to determine the smooth conjugacy class of the system, and the invariant will be given by its Lyapunov exponents.

Let us recall briefly the context for the invertible case. Given a closed manifold $M$, for a $C^1$ Anosov diffeomorphism $f: M \to M$, any nearby $C^1$ diffeomorphism $g$ is topologically conjugate to $f$, that is, there exists $h: M \to M$ such that $g \circ h = h \circ f$. This conjugacy $h$ is Hölder continuous, but if $h$ is $C^1$ and $x$ a periodic point such that $f^p(x) = x$, then
    \begin{align*}
        D(h \circ f^p)_x &= D(g^p \circ h)_x\\
        \implies Df^p_x &= (Dh_x)^{-1} \circ Dg_{h(x)}^p \circ Dh_x.
    \end{align*}
Therefore, if the conjugacy between $f$ and $g$ is smooth, then the matrices $Df^p_x$ $Dg_{h(x)}^p$ are conjugate. The conjugacy of these matrices is a necessary condition for the smooth conjugacy, and it is natural to ask whether it is a sufficient condition. It turns out that this condition on the periodic points is the main property of the sufficient condition, as can be seen from the results obtained in the invertible setting \cite{de1992smooth, gogolev22c, gogolev2011local, gogolev2020local}.

By considering maps that do not have an inverse, we have a few interesting behaviors. In contrast to Anosov diffeomorphisms, Anosov endomorphisms are not structurally stable in general. Feliks Przytycki \cite{przytycki1976anosov} used the fact that we can perturb a linear Anosov endomorphism to create many different unstable directions to show that a perturbation of an Anosov endomorphism may not be conjugated to it, since a conjugacy should preserve stable and unstable manifolds. Additionally, he proved that Anosov endomorphisms are structurally stable if and only if they are invertible or expanding maps. 

We say that an Anosov endomorphism is \textit{special} if each point has only one unstable direction. Although an Anosov endomorphism may not be structurally stable, it is conjugated to its linearization if and only if it is special \cite{aoki1994topological, sumi1994linearization, moosavi2019classification}. 

For hyperbolic maps on surfaces, the conjugacy between $Df^p_x$ and $Dg_{h(x)}^p$ for corresponding periodic points $x$ and $h(x)$ is equivalent to $f$ and $g$ having the same Lyapunov exponents $\lambda^{u/s}_f(x) = \lambda^{u/s}_g(h(x))$ on these points. We prove that this condition is indeed sufficient to guarantee smooth conjugacy.

\begin{maintheorem}
    \label{teo:a}
    Let $f, g: \mathbb{T}^2 \to \mathbb{T}^2$ be $C^k$, $k \geq 2$, Anosov endomorphisms topologically conjugated by $h: \mathbb{T}^2 \to \mathbb{T}^2$ homotopic to $Id$. If the corresponding periodic points of $f$ and $g$ have the same Lyapunov exponents, then the conjugacy $h$ is $C^k$. In particular, if $f$ and $g$ are $C^\infty$, then $h$ is also $C^\infty$.
\end{maintheorem}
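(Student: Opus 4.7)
\emph{Overall strategy.} The plan follows the classical two-foliation approach: first show that $h$ is uniformly $C^k$ along the leaves of the stable foliation, next show the analogous statement along a family of local unstable manifolds, and finally combine the two leafwise regularity statements via Journé's regularity lemma. The hypothesis that $h$ is homotopic to the identity guarantees that $f$ and $g$ share a common linearization $A$, which will be used to compare stable and unstable directions of $f$ and $g$ on the universal cover.

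\emph{Smoothness along stable leaves.} On $\mathbb{T}^2$ the stable bundle $E^s_f$ is one-dimensional and depends only on the base point, so its integral stable foliation $\mathcal{W}^s_f$ is a genuine $C^{1+\alpha}$ foliation with $C^k$ leaves, and $h$ sends $\mathcal{W}^s_f$ onto $\mathcal{W}^s_g$. Parametrizing a stable leaf by arclength and restricting $f$ and $g$ to these parametrizations turns the problem into a comparison of one-dimensional $C^k$ hyperbolic contractions. The hypothesis that corresponding periodic points have the same stable Lyapunov exponents translates into an equality of derivatives of appropriate return maps along periodic orbits, and a de la Llave-type Livšic argument produces a $C^k$ coboundary showing that $h$ is $C^k$ along every stable leaf, with $C^k$-norms uniformly bounded in the basepoint.

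\emph{Smoothness along unstable leaves.} This is the delicate step. For a non-special Anosov endomorphism, through each point there is a whole family of local unstable manifolds indexed by backward orbits, and there is no unstable foliation on $\mathbb{T}^2$ to which the previous argument applies directly. I would pass to the inverse limit $\widehat{\mathbb{T}}^2_f$, where the natural extension $\widehat{f}$ admits a bona fide unstable foliation whose leaves project injectively onto local unstable manifolds of $f$; the conjugacy lifts to a homeomorphism $\widehat{h}$ intertwining the two unstable foliations. Repeating the leafwise Livšic argument in the inverse limit shows that, along every local unstable manifold arising from any prehistory, $h$ is $C^k$ with uniform $C^k$-norm. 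I expect this to be the main obstacle: the parameter space of prehistories is a Cantor-like fiber over each base point, and one must rule out degeneration of the leafwise $C^k$-norms and of the matching of unstable directions as the prehistory varies; the homotopy-to-identity assumption enters here to align the unstable directions of $f$ and $g$ using $A$ in a consistent way.

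\emph{Assembly via Journé.} To invoke Journé's regularity lemma one needs two transverse laminations defined on an open set with uniformly $C^k$ leaves along which $h$ is leafwise $C^k$. A continuous local selection of backward orbits produces, on a neighborhood of any point of $\mathbb{T}^2$, a local unstable lamination transverse to $\mathcal{W}^s_f$ with precisely these properties, and Journé's theorem then yields $C^k$ regularity of $h$ on that neighborhood, hence globally. When $f$ and $g$ are $C^\infty$, applying the argument at each finite $k$ promotes the conclusion to $C^\infty$ regularity of $h$.
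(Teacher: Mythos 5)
Your high-level architecture (leafwise regularity via a Livšic-type cohomological argument, assembled via Journé's lemma) matches the paper's, and the observation that the homotopy-to-identity hypothesis forces $f$ and $g$ to have the same linearization $A$ is correct. But the two steps you treat as black boxes are precisely where the real work lives, and as written there are genuine gaps.

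First, the claim that ``a de la Llave-type Livšic argument produces a $C^k$ coboundary showing that $h$ is $C^k$ along every stable leaf'' compresses several non-trivial steps. Livšic applied to $\varphi_1 = D^s_f$ and $\varphi_2 = D^s_g \circ h$ only yields a Hölder transfer function $P$ with $\varphi_1/\varphi_2 = (P\circ f)/P$; it does not by itself give any regularity of $h$. The paper's route is to build a leafwise density $\rho_f(x,y) = \prod_{n\geq 1} D^u_F(F^{-n}\bar x)/D^u_F(F^{-n}\bar y)$ and the associated adapted metric $\tilde d_f$, use $P$ to prove that $h$ is uniformly Lipschitz along leaves (Lemma~\ref{lem:teo2-1}), conclude that $h$ is leafwise differentiable only Lebesgue-a.e., and then \emph{spread} that a.e.\ differentiability to every point using the transverse foliation and absolute continuity (Lemma~\ref{lem:teo2-2}). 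This spreading step is essential: Journé's lemma requires $h$ to be uniformly $C^{r+\alpha}$ along \emph{every} leaf, and a measurable or a.e.\ leafwise derivative is not enough. Your proposal omits both the a.e.-to-everywhere promotion and the subsequent bootstrap from $C^{1+\alpha}$ to $C^k$, which in the paper uses the identity $\rho_g(h(x),h(y)) = \dfrac{D^u_H(\bar x)}{D^u_H(\bar y)}\rho_f(x,y)$ together with the $C^{k-1}$ smoothness of the one-dimensional densities $\rho_{f/g}$; without addressing this, one only obtains $C^{1+\alpha}$, not the stated $C^k$.

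Second, passing to the inverse limit $\widehat{\mathbb T}^2_f$ to handle the unstable direction does not work as stated: the natural extension is a solenoid (a Cantor fibration over $\mathbb T^2$), not a manifold, and neither Journé's lemma nor the leafwise calculus makes sense there directly. You do eventually descend to a ``continuous local selection of backward orbits,'' and this is essentially the right idea, but then the inverse limit was a detour. The paper avoids the issue entirely by working from the outset with a lift $F$ of $f$ to the universal cover $\mathbb R^2$: $F$ is an Anosov diffeomorphism with genuine, quasi-isometric stable and unstable foliations, one fixes a single lift $\bar\xi$ of $\xi$ and a small foliated box $\mathcal B_{\bar\xi}$ that projects isometrically to $\mathbb T^2$, and along the projected box $D^u_F$ and $\rho_f$ are unambiguously defined; the Livšic/Lipschitz/spreading/bootstrap machinery then runs exactly as in the diffeomorphism case, and Journé is applied to $H$ on $\mathcal B_{\bar\xi}$. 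This resolves in a concrete way the ``main obstacle'' you flag about degeneration over the Cantor fiber, by never letting the prehistory vary: on each foliated box it is fixed by the choice of lift.
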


This kind of result was addressed for the invertible case on $\mathbb{T}^2$ by Rafael de la Llave, José Manuel Marco and Roberto Moriyón in a serie of works \cite{marco1987invariants1, de1987invariants2, marco1987invariants3, de1988invariants4, de1992smooth} and for $\mathbb{T}^3$ by Andrey Gogolev and Misha Guysinsky \cite{gogolev22c, gogolev2017bootstrap}, and it has counterexamples in higher dimensions \cite{de1992smooth}, in which more hypotheses are required \cite{gogolev2011local}, \cite{gogolev2020local}.

For the non-invertible setting, the above theorem extends some of the results obtained independently by Fernando Micena \cite[Theorem A, Theorem B]{micena2020rigidity}. On \cite[Theorem A]{micena2020rigidity}, $f$ and $g$ need not to be special, as in our case, but he requires the topological conjugacy to preserve SRB measures instead of the condition on Lyapunov exponents of periodic points. The core of the proof is \cite[Lemma 4.2]{micena2020rigidity}, where he obtains a candidate to smooth conjugacy as the limit of solutions of differential equations over local unstable manifolds. On \cite[Theorem B]{micena2020rigidity}, he requires, in addition to the condition on Lyapunov exponents of periodic points, that $f$ is strongly special, meaning that each point has only one unstable direction and that the stable manifold is dense. Then he uses a conformal metric on unstable manifolds to, again, solve some differential equations and find a candidate for $C^1$ conjugacy. Since it is $C^1$, it preserves SRB measures, and then he applies \cite[Theorem A]{micena2020rigidity}. Additionaly, after the submission of our work, in a recent preprint, Ruihao Gu and Yi Shi prove a similar result in \cite[Theorem 1.3]{gu-shi-2022}.

Similarly to the approaches for the invertible setting, we apply a regularity lemma by Jean-Lin Journé \cite{journe1988regularity} to ``spread'' the regularity of $h$ along transversal foliations to the same regularity in a whole neighborhood of a point. However, we have to do so locally, since $h$ is not invertible. Additionally, there are several subtleties along the steps of the proof, such as fixing inverses locally in a well-defined way.

Recently, Jinpeng An, Shaobo Gan, Ruihao Gu and Yi Shi \cite{an-gan-gu-shi-2022} proved that an non-invertible $C^{1+\alpha}$ Anosov endomorphism on $\mathbb{T}^2$ is special if and only if every periodic point for $f$ has the same stable Lyapunov exponent. In particular, $\lambda^s_f(x) \equiv \lambda^s_A$ for every $x \in \mathbb{T}^2$ periodic for $f$, where $A$ is the linearization of $f$. Then, if $f$ is special, we have the following corollary of Theorem \ref{teo:a}.

\begin{corollary}
    Let $f: \mathbb{T}^2 \to \mathbb{T}^2$ be a $C^k$, $k \geq 2$, special Anosov endomorphism and $A$ its linearization. If $\lambda^u_f(x) \equiv \lambda^u_A$ for every $x \in \mathbb{T}^2$ periodic for $f$, then the conjugacy $h$ between $f$ and $A$ is $C^k$. In particular, if $f$ is $C^\infty$, then $h$ is also $C^\infty$.
\end{corollary}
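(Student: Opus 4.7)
The plan is to derive this as a direct consequence of Theorem~\ref{teo:a} applied to the pair $(f, A)$, after verifying each hypothesis of that theorem.

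First, I would pin down the Lyapunov exponents of $A$. Since $A$ is a linear hyperbolic toral endomorphism with eigenvalues of absolute value $>1$ and $<1$, its unstable and stable Lyapunov exponents are constant everywhere: $\lambda^{u/s}_A(q) \equiv \lambda^{u/s}_A$ for every periodic $q$ of $A$. Since $h$ is a topological conjugacy between $f$ and $A$, it induces a bijection between the periodic orbits, so it suffices to prove that for every periodic $p$ of $f$ one has $\lambda^{u}_f(p) = \lambda^{u}_A$ and $\lambda^{s}_f(p) = \lambda^{s}_A$; this will give exactly the ``matching Lyapunov exponents at corresponding periodic points'' hypothesis of Theorem~\ref{teo:a}.

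Next, I would combine the two pieces of information on exponents. The unstable equality $\lambda^{u}_f(p) = \lambda^{u}_A$ is given directly in the hypothesis of the corollary. For the stable side I would invoke the theorem of An--Gan--Gu--Shi quoted in the excerpt: since $f$ is a $C^{1+\alpha}$ special Anosov endomorphism on $\mathbb{T}^2$, every periodic point of $f$ has the same stable Lyapunov exponent, and that common value equals $\lambda^{s}_A$. Thus the stable matching holds for free, purely from specialness, with no further assumption needed.

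Finally, I would check the homotopy-to-identity condition on $h$. Since $f$ is special, the topological conjugacy to its linearization $A$ (whose existence is exactly the special characterization recalled before Theorem~\ref{teo:a}) is produced by the standard lift-and-average construction in which the lifted conjugacy differs from the identity by a bounded map on $\mathbb{R}^2$; projecting back to $\mathbb{T}^2$ yields a map homotopic to the identity. With all hypotheses of Theorem~\ref{teo:a} verified ($C^k$ regularity with $k \geq 2$, topological conjugacy homotopic to $Id$, matching exponents at all corresponding periodic points), Theorem~\ref{teo:a} delivers that $h$ is $C^k$, and $C^\infty$ when $f$ is $C^\infty$. The only mild subtlety I would be careful about is making sure the An--Gan--Gu--Shi statement is invoked in the correct direction, namely that specialness implies constancy of the stable exponent along periodic orbits and pins down its value to $\lambda^{s}_A$; with that in place the argument is essentially a bookkeeping reduction to Theorem~\ref{teo:a}.
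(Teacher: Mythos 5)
Your argument reproduces the paper's own (implicit) derivation: the unstable match is the hypothesis, the stable match is supplied by the An--Gan--Gu--Shi theorem (specialness forces $\lambda^s_f(p)\equiv\lambda^s_A$ on periodic points), the conjugacy homotopic to the identity comes from the Sumi/Moosavi--Tajbakhsh characterization of special endomorphisms, and Theorem~\ref{teo:a} then closes the argument. One small caveat worth stating explicitly: the An--Gan--Gu--Shi result is for \emph{non-invertible} $C^{1+\alpha}$ Anosov endomorphisms on $\mathbb{T}^2$, so that qualifier (implicit throughout the paper) should accompany your invocation; if $f$ were an Anosov diffeomorphism, specialness would be automatic and would not pin down the stable exponent.
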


It remains to provide examples of non-special Anosov endomorphisms on $\mathbb{T}^2$ that are conjugated but with the conjugacy not being $C^1$, giving then conditions to apply Theorem \ref{teo:a} in its full generality. If $f$ and $g$ are conjugated, by lifting the conjugacy to the inverse limit space, $f$ and $g$ are inverse-limit conjugated. Then, a necessary condition is that $f$ and $g$ have the same linearization, since by Nobuo Aoki and Koichi Hiraide \cite[Theorem 6.8.1]{aoki1994topological} $f$ and its linearization are inverse-limit conjugated. Besides, the conjugacy must necessarily be a homeomorphism between $W^u_f(\tilde{x})$ and $W^u_g(\tilde{h}(\tilde{x}))$ for each of the unstable directions. Fernando Micena and Ali Tahzibi \cite{micena-tahzibi2016unstable} proved that, if $f$ is not special, there is a residual subset $\mathcal{R} \in \mathbb{T}^2$ such that every $x \in \mathcal{R}$ has infinitely many unstable directions. This suggests the complexity of this problem.

\textbf{Question:} Under which conditions do we have a topological conjugacy between two non-special Anosov endomorphisms on $\mathbb{T}^n$ with the same linearization?

An answer to this question for $\mathbb{T}^2$ is given in \cite[Theorem 1.1]{gu-shi-2022}, in which they prove that, if $f$ and $g$ are non-invertible and homotopic, then $f$ is topologically conjugate to $g$ if, and
only if, the corresponding periodic points of $f$ and $g$ have the same stable Lyapunov exponents. Then Theorem $A$ can be reformulated as follows.

\begin{corollary}
    Let $f, g: \mathbb{T}^2 \to \mathbb{T}^2$ be $C^k$, $k \geq 2$, homotopic Anosov endomorphisms. If the corresponding periodic points of $f$ and $g$ have the same Lyapunov exponents, then they are conjugated and the conjugacy $h$ is $C^k$. In particular, if $f$ and $g$ are $C^\infty$, then $h$ is also $C^\infty$.
\end{corollary}

To provide a context in which Theorem \ref{teo:a} can be applied, our approach is towards the regularity of the stable and unstable directions. This is inspired by a conjecture for Anosov diffeomorphisms on compact manifolds, which states that $C^2$ regularity of the stable and unstable foliations of an Anosov diffeomorphism would imply the same regularity for the conjugacy with the Anosov automorphism homotopic to it \cite{flaminio-katok, varao-DS}. A similar result for $C^{1+\beta}$ stable and unstable foliations cannot hold, since in surfaces the stable and unstable manifolds are $C^{1+\beta}$ but the conjugacy is generally not better than Hölder continuous.

Another inspiration is given by the fact that the absolute continuity of stable and unstable foliations is a central property used to prove the ergodicity of conservative Anosov diffeomorphisms \cite{anosov1969geodesic}. That is, a regularity condition on the foliation implies a very specific behavior of the map.

We then consider the following context: what properties may we require on the stable or unstable foliations of a special Anosov endomorphism to get regularity for the conjugacy map?

Instead of absolute continuity, we work with a uniform version of absolute continuity --- called the \emph{UBD (uniform bounded density)} property, as defined by F. Micena and A. Tahzibi \cite{micena2013regularity}, see Definition \ref{def:UBD}. In \cite{varao2018rigidity} it is shown that a smooth conservative partially hyperbolic diffeomorphism on $\mathbb T^3$ is smoothly conjugate to its linearization if and only if the center foliation has the UBD property. This can be seen as a sharp result, since it is given as an example on \cite{varao2016center} a conservative partially hyperbolic diffeomorphism on $\mathbb T^3$ such that the center foliation is $C^1$ but the conjugacy map is not $C^1$. Hence, a uniformity condition in the densities is the natural candidate for rigidity results. Since the center foliation does not exist in our context, we work with a regularity condition for the unstable foliation. In a work in preparation, Marisa Cantarino, Simeão Targino da Silva and Régis Varão \cite{holonomies} prove that the UBD property is equivalent do the holonomies having uniformly bounded Jacobians.

For the sake of simplicity, we state the results for endomorphisms with constant Jacobian as a particular case, and we treat the more general context on Section \ref{sec:proofb}, for a class of maps that satisfy a condition analogue to conservativeness.

\begin{maintheorem} \label{teo:b}
    Let $f: \mathbb{T}^2 \to \mathbb{T}^2$ be a $C^2$ Anosov endomorphism with constant Jacobian and $A$ its linearization. If the unstable foliation of $f$ is absolutely continuous with uniformly bounded densities, then $\lambda_f^{\sigma} \equiv \lambda_A^{\sigma}$ for $\sigma \in \{u, s\}$.
\end{maintheorem}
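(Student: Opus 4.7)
The plan is to show that the unstable expansion $\phi^u := \log \|Df|_{E^u}\|$ is continuously cohomologous to a constant $c$ and then to identify $c$ with $\lambda^u_A$ via a two-sided entropy comparison. For the setup: since $f$ has constant Jacobian and topological degree $|\det A|$, the normalization $\int J_f\, dm = \deg f$ forces $J_f \equiv |\det A|$; hence $f_* m = m$, Lebesgue measure $m$ is $f$-invariant, and at every Oseledec-regular (in particular every periodic) point $\lambda^u_f + \lambda^s_f = \log|\det A| = \lambda^u_A + \lambda^s_A$. This reduces the theorem to proving $\lambda^u_f \equiv \lambda^u_A$ on periodic orbits.

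The main step is to exploit the UBD hypothesis, which (as recalled in the introduction) is equivalent to the unstable holonomies along local stable leaves having Jacobians uniformly bounded in some interval $[1/K, K]$. The classical infinite-product formula expressing these Jacobians as a telescoping backward cocycle---interpreted along compatible preorbits in the natural extension to handle non-invertibility---translates this bound into
\[
\Bigl|\sum_{k=0}^{N}\bigl[\phi^u(f^{-k}y) - \phi^u(f^{-k}x)\bigr]\Bigr| \le \log K
\]
for every $N$ and every pair $x,y$ on the same local stable leaf. A Gottschalk--Hedlund/Livsic-style argument then produces a continuous transfer function $\psi$ and a constant $c$ with $\phi^u = c + \psi \circ f - \psi$. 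By Livsic's theorem for Anosov endomorphisms, $\lambda^u_f(p) = c$ at every periodic $p$, and $c = \int \phi^u\, d\mu$ for every $f$-invariant probability measure $\mu$.

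To identify $c = \lambda^u_A$ I would apply the coboundary identity to two distinguished measures. First, UBD implies that the unstable conditionals of $m$ are absolutely continuous, so $m$ is the SRB measure of $f$, and Pesin's entropy formula yields $h_m(f) = \int \phi^u\, dm = c$; combined with the variational principle and $h_{\mathrm{top}}(f) = h_{\mathrm{top}}(A) = \lambda^u_A$, this gives $c \le \lambda^u_A$. Second, letting $\nu$ be the measure of maximal entropy of $f$, Ruelle's inequality yields $\lambda^u_A = h_\nu(f) \le \int \phi^u\, d\nu = c$. Combining, $c = \lambda^u_A$, and by the reduction above, $\lambda^s_f(p) = \lambda^s_A$ at every periodic $p$.

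The hard part will be the second paragraph: making the infinite-product holonomy formula and the Livsic-style cohomology implication rigorous in the endomorphism setting, where the unstable direction depends on the chosen preorbit and backward iteration requires a consistent choice of inverse branches. This is presumably where the paper's more general framework in Section~\ref{sec:proofb} (an analogue of conservativeness in place of constant Jacobian) is needed, and where most of the technical work of the actual proof will go.
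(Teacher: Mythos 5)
Your first paragraph (constant Jacobian gives $J_f\equiv|\det A|$, hence $m$ is $f$-invariant and $\lambda^u_f+\lambda^s_f=\log|\det A|$ at every regular point, reducing to proving $\lambda^u_f\equiv\lambda^u_A$) is correct and matches what the paper does at the very end of its proof. But the route you propose from there is genuinely different from the paper's, and it has a real gap in the middle. The paper's mechanism is metric, not thermodynamic: it works on the universal cover with the lift $F$, builds a foliated strip $\mathcal{B}_0$ over the stable leaf $W^s_F(0)$, and constructs measures $\eta_x$ on unstable $F$-leaves as limits of $\alpha^k F^k_* m^0_{F^{-k}(x)}$, with $\alpha$ the unstable eigenvalue of $A$. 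The key input is \emph{quasi-isometry} of the unstable foliation (Lemma~\ref{lem:lemma1}), which forces the length of the $k$-th unstable segment $W_k(F^k(x))$ to grow like $\alpha^k$, and UBD together with constant Jacobian keeps the densities of $\eta^k_x$ uniformly bounded. The resulting limit measures satisfy $F_*\eta_x=\alpha^{-1}\eta_{F(x)}$ and are uniformly equivalent to leaf-Lebesgue, which directly pins $\lambda^u_F\equiv\log\alpha$; stable holonomies (which are $C^1$ in dimension two by Proposition~\ref{prop:F-C1-hol}) extend the conclusion from the full-measure set where the conditionals exist to every point.

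The gap in your proposal is the assertion that UBD $\Rightarrow$ $\phi^u$ is continuously cohomologous to a constant via a ``Gottschalk--Hedlund/Livsic-style argument.'' First, a small but telling inconsistency: the Jacobian of the stable holonomy between two unstable plaques is a \emph{forward} telescoping product $\prod_{n\ge 0}D^u(f^n y)/D^u(f^n x)$ for $y\in W^s_{\mathrm{loc}}(x)$ (backward iterates diverge along stable leaves, so the sum you wrote does not even converge). Second, and more importantly, boundedness of this stable Gibbs cocycle of $\phi^u$ is a bound on $\sum_{n\ge 0}[\phi^u(f^ny)-\phi^u(f^nx)]$ for $y\in W^s(x)$, which is \emph{not} a bound on Birkhoff sums $S_n\phi^u$ along orbits. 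Gottschalk--Hedlund requires a minimal system and a one-sided bound on orbit sums; transitive Anosov maps are not minimal, and you have neither the right domain nor the right bound. Livsic in turn requires the periodic-orbit obstructions of $\phi^u-c$ to vanish---precisely what you are trying to establish, not something you get for free from UBD. So as written this step is circular/unsupported. There is also a structural obstruction in the non-special case: $\phi^u=\log\|Df|_{E^u}\|$ is a function on the inverse limit $\tilde{M}$, not on $\mathbb{T}^2$, so the coboundary equation $\phi^u=c+\psi\circ f-\psi$ with $\psi$ on $\mathbb{T}^2$ is not even well posed; this is exactly the kind of difficulty the paper sidesteps by working on the universal cover, where $F$ is a genuine Anosov diffeomorphism. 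Finally, even granting the cohomology, your entropy identification needs care in the endomorphism setting (Pesin's formula and Ruelle's inequality in the form you use require the inverse-limit framework à la Qian--Zhu), though those details are fixable; the unsupported cohomology step is the real obstruction.
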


In the above result $f$ is not required to be special. Note that, in general, $f$ does not have a global unstable foliation, since each point may have more than one unstable leaf, so when we say ‘‘the unstable foliation of $f$ has the UBD property’’, we are actually looking at a foliation on the universal cover of $\mathbb{T}^2$ (see Definition \ref{def:UBD}). 

We introduce a more general condition than constant Jacobian for $f$, that we call \textit{quasi preservation of densities} (see Definition \ref{def:hyp-c}). It means that the conditional measures are ``controlled'' under iterations of $f$, which is automatic if the Jacobian is constant (see Lemma \ref{lem:claim1}). In Subsection \ref{sec:teoc}, we see that Theorem \ref{teo:b} is a special case of the following.

\begin{maintheorem} \label{teo:c}
    Let $f: \mathbb{T}^2 \to \mathbb{T}^2$ be a $C^\infty$ Anosov endomorphism with quasi preservation of densities along its invariant foliations, and $A$ its linearization. If the stable and unstable foliations of $f$ are absolutely continuous with uniformly bounded densities, then $\lambda_f^{\sigma} \equiv \lambda_A^{\sigma}$ for $\sigma \in \{u, s\}$.
\end{maintheorem}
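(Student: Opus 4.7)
The plan is to reproduce the proof of Theorem \ref{teo:b} almost verbatim, adapting the two places where constant Jacobian was used: Claim 1 inside the proof of Lemma \ref{lem:lemma2}, and the Birkhoff argument at the end that recovered $\lambda^s_f$ from $\lambda^u_f$ via $\log Jf \equiv \log \sigma$. The first will be replaced by a uniform-equivalence version obtained from UBD together with the quasi preservation of densities hypothesis along the unstable foliation; the second will be replaced by re-running the entire argument with $F^{-1}$ in place of $F$, obtaining $\lambda^s_f \equiv \lambda^s_A$ directly.

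For the unstable exponent, I would replace Claim 1 as follows. Setting $y = F^{-k}(x)$, two applications of the UBD property yield $m^0_y \stackrel{u}{\sim} \hat{\lambda}^0_y$ and $m^k_{F^k(y)} \stackrel{u}{\sim} \hat{\lambda}^k_{F^k(y)}$, each with a uniform constant $C$; pushing the first forward by $F^k$ preserves the equivalence, and the quasi preservation of densities along the unstable foliation then gives $F^k_* \hat{\lambda}^0_y \stackrel{u}{\sim} \hat{\lambda}^k_{F^k(y)}$ with uniform constant. Chaining these three equivalences produces $F^k_* m^0_{F^{-k}(x)} \stackrel{u}{\sim} m^k_x$ with a uniform constant, which is exactly what is needed inside the proof of Lemma \ref{lem:lemma2} to conclude that $\eta^k_x$ is uniformly equivalent to $\alpha^k \hat{\lambda}^k_x$, and hence to the induced Lebesgue $\lambda_x$ on the global leaf by Lemma \ref{lem:lemma1}. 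From here the diagonal extraction of the limit measure $\eta_x$, the stable-holonomy averaging to obtain $\mathfrak{m}_z$ and $\mathfrak{M}_z$ at every $z \in \mathcal{B}_0$, and the shrinking-interval argument all proceed unchanged, yielding $\lambda^u_f \equiv \lambda^u_A$.

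For the stable exponent, since $F : \mathbb{R}^2 \to \mathbb{R}^2$ is a diffeomorphism even though $f$ is not, the same conjugacy $H$ that relates $F$ to $A$ also relates $F^{-1}$ to $A^{-1}$, and the stable foliation of $F$ becomes the unstable foliation of $F^{-1}$ with expanding eigenvalue $\beta^{-1}$, where $\beta$ is the stable eigenvalue of $A$. The UBD of the stable foliation of $f$ gives directly UBD of the unstable foliation of $F^{-1}$, and by a change of variables setting $\mathcal{W} = F^{-k}(\mathcal{V})$ one checks that quasi preservation of densities of $F$ along the stable foliation is equivalent to the corresponding statement for $F^{-1}$ along its unstable foliation. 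Propositions \ref{prop:F-C1-hol} and \ref{prop:F-quasi-iso} provide the $C^1$ regularity of the relevant distribution and the quasi-isometry of the foliation in this reversed setting, so that the whole argument of Theorem \ref{teo:b} re-runs with $F^{-1}$ and $\beta^{-1}$, yielding $\lambda^s_f \equiv \lambda^s_A$.

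The main technical point, aside from bookkeeping, is the verification that the $F^{-1}$ reformulation of the hypotheses matches the form required to apply the proof of Theorem \ref{teo:b} off the shelf; this reduces to a careful but routine change-of-variables computation confirming that the notion of quasi preservation of densities introduced in Definition \ref{def:hyp-c} is symmetric under time reversal of the diffeomorphism acting on the given foliation.
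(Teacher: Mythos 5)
Your proposal is correct and follows the same route as the paper: the paper replaces Claim~1 inside Lemma~\ref{lem:lemma2} by chaining the UBD equivalence $m^k_x \stackrel{u}{\sim} \hat{\lambda}^k_x$, the $F^k$-pushforward of the UBD equivalence at $F^{-k}(x)$, and the quasi-preservation equivalence $F^k_*\hat{\lambda}^0_{F^{-k}(x)} \stackrel{u}{\sim} \hat{\lambda}^k_x$, exactly as you describe, and then declares that the stable exponent follows ``by an analogous argument.'' Your elaboration of that analogous argument — running the proof with $F^{-1}$ on the stable foliation (which is the expanding foliation of $F^{-1}$), noting that the change of variables $\mathcal{W}=F^{-k}(\mathcal{V})$ makes Definition~\ref{def:hyp-c} time-reversal symmetric, and that the $C^1$ holonomy and quasi-isometry facts are available on $\mathbb{T}^2$ for both distributions — is a correct unpacking of the paper's one-line remark, and is a useful addition rather than a deviation.
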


By joining Theorems \ref{teo:a} and \ref{teo:b} for $f$ special and $g = A$, we have that our regularity condition on the unstable foliation implies equality of Lyapunov exponents, which implies that $h$ is as regular as $f$. Conversely, if $h$ is $C^\infty$, the unstable leaves of $F$ are taken to unstable lines of $A$ by a lift $H$ of $h$, which implies the UBD property of the unstable foliation of $F$. The same holds for the stable foliation, and Corollary \ref{teo:coro} follows.

\begin{corollary}
    \label{teo:coro}
    Let $f: \mathbb{T}^2 \to \mathbb{T}^2$ be a $C^\infty$ special Anosov endomorphism with constant Jacobian and let $A$ be its linearization. The unstable foliation of $f$ is absolutely continuous with uniformly bounded densities if and only if $f$ is $C^\infty$ conjugate to $A$.
\end{corollary}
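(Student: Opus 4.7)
The plan is to prove both implications by combining the two main theorems of the paper with the classification of special Anosov endomorphisms.

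For the forward direction, assume that the unstable foliation of $f$ is absolutely continuous with uniformly bounded densities. Theorem \ref{teo:b} immediately gives $\lambda_f^\sigma(x) = \lambda_A^\sigma$ for every $x \in \mathbb{T}^2$ and $\sigma \in \{u,s\}$. Since $f$ is special, the classification results cited in the introduction (Aoki--Hiraide, Sumi, Moosavi) produce a topological conjugacy $h$ between $f$ and $A$ that is homotopic to the identity. The linear map $A$ has constant Lyapunov exponents $\lambda_A^u$ and $\lambda_A^s$ at every point, and in particular at each periodic point of $A$. Therefore $f$ and $g:=A$ satisfy the hypotheses of Theorem \ref{teo:a} at all pairs of corresponding periodic points, and we conclude that $h$ is $C^\infty$.

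For the converse, suppose that $h$ is a $C^\infty$ conjugacy between $f$ and $A$. A lift $H: \mathbb{R}^2 \to \mathbb{R}^2$ is then a $C^\infty$ diffeomorphism conjugating the lift $F$ with $A$ acting linearly on $\mathbb{R}^2$, and since $h$ is a diffeomorphism of the compact torus, the Jacobian of $H$ is uniformly bounded above and away from zero. The unstable foliation of $A$ on $\mathbb{R}^2$ consists of parallel affine lines and is trivially absolutely continuous with uniformly bounded densities. Since $H$ sends unstable leaves of $F$ to unstable lines of $A$ and has uniformly bounded Jacobian, the change of variables formula transfers the UBD property from the unstable foliation of $A$ back to that of $F$, which is precisely the condition required on $f$.

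The only point requiring real care is the verification that the UBD condition as stated in Definition \ref{def:UBD} is invariant under the lifted conjugacy $H$; this is expected to be a direct consequence of the uniform boundedness of $\det DH$ and $\det DH^{-1}$, together with the fact that unstable leaves of both $F$ and $A$ are $C^{1+\alpha}$ curves and that $H$ acts as a uniform change of variables both along the leaves and transverse to them. Beyond this observation, the argument is a straightforward combination of Theorems \ref{teo:a} and \ref{teo:b}.
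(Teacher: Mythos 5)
Your proposal is correct and follows essentially the same route the paper takes: the forward implication combines Theorem \ref{teo:b} (to equalize Lyapunov exponents) with the classification of special endomorphisms (to get a topological conjugacy homotopic to $Id$) and then Theorem \ref{teo:a} with $g = A$; the converse transfers the UBD property from the trivially-UBD unstable line foliation of $A$ back to $F$ through the lift $H$, which has uniformly bounded Jacobian both globally and along leaves. The point you flag as ``requiring real care'' --- that uniform equivalence $H_{*}m \stackrel{u}{\sim} m$ together with $H$ mapping $W^u_F$ to $W^u_A$ propagates to uniform equivalence of the leafwise conditionals --- is exactly the observation the paper relies on, stated only slightly more briefly there.
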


Corollary \ref{teo:coro} is the natural formulation of a result similar to Theorem 1.1 from \cite{varao2018rigidity} for Anosov endomorphisms on $\mathbb{T}^2$. Indeed, working with preservation of volume for maps that are not invertible is more subtle: a conservative endomorphism may not have constant Jacobian. Instead, we ask for both foliations to have the UBD property and to have their induced volume densities \textit{quasi preserved under $f$} (see Definition \ref{def:hyp-c}), with the case with $f$ having constant Jacobian as a particular case. They are necessary conditions in Corollary \ref{teo:coro}: if an Anosov endomorphism $f$ is $C^1$ conjugated to its linearization, then the unstable and stable foliations have the UBD property and present quasi preservation of densities with respect to $f$, as we see in Section \ref{sec:proofb}.

\subsection{Structure of the paper and comments on the results}

In Section \ref{sec:preli} we introduce formally some of the aforementioned notions, as well as some properties necessary for the proofs. Regarding Theorem \ref{teo:a}, that we prove in Section \ref{sec:proofa}, we want to apply Journé's regularity lemma \cite{journe1988regularity} to take the regularity of the conjugacy on transverse foliations to the whole manifold. To avoid complications from the fact that $f$ and $g$ are not invertible, we will apply this lemma locally on the universal cover, and this requires some adaptations. We proceed (as done in \cite{gogolev22c} for the weak unstable direction) by constructing, for the unstable direction, a function $\rho$ and using it to obtain the regularity of $h$ along unstable leaves. For stable leaves, an analogous argument holds. Even knowing that there are SRB measures for Anosov endomorphisms \cite{qian2002srb}, their definition involves the inverse limit space, and to avoid it our definition of $\rho$ is adapted to a local foliated box projected from the universal cover. The function $\rho$ is, indeed, proportional to the density of the conditional of the SRB measure on the unstable leaves where it is well defined.

The proof of Theorem \ref{teo:a} relies on the low dimension of $\mathbb{T}^2$ to construct metrics on each unstable leaf that ``behave regularly'' under $f$. We use these metrics to prove that the conjugacy is uniformly Lipschitz restricted to the leaf in Lemma \ref{lem:teo2-1}. After that, we prove that the conjugacy restricted to each leaf is in fact $C^1$. Additionally, we promote this $C^1$ regularity to the regularity of $f$ and $g$ with the argument that the densities of the conditionals of the SRB measure on each unstable leaf are $C^{k-1}$, so we are using the fact that the unstable foliation is one dimensional to claim that our densities $\rho$ are $C^{k-1}$. Since the argument for the stable foliation is analogous, it cannot be easily transposed to higher dimensions. Besides, a similar theorem for $\mathbb{T}^n$, $n \geq 4$, would need additional hypothesis, as it includes the invertible case \cite{de1992smooth}.

For the proof of Theorem \ref{teo:b} in Section \ref{sec:proofb}, we need the stable and unstable distributions of a lift of an Anosov endomorphism to be $C^1$. For this, we see in Proposition \ref{prop:F-C1-hol} that a codimension one unstable distribution for a lift of an Anosov endomorphism to the universal cover is $C^1$, again making use of the fact that we are working on $\mathbb{T}^2$. The strategy to prove Theorem  \ref{teo:b} is to construct, along unstable leaf segments of the lift $F$, invariant measures uniformly equivalent to the length that grow at the same rate that the unstable leaves of $A$, then the unstable Lyapunov exponents of both $f$ and $A$ are the same. These measures are defined for leaves of a full volume set, and we use the regularity of stable holonomies --- provided by the fact that the stable distribution is $C^1$ --- to construct similar measures at each point and conclude that $\lambda_f^{u} \equiv \lambda_A^{u}$.

\section{Preliminary concepts}
\label{sec:preli}

Along this section, we work with concepts and results needed for the proofs on sections \ref{sec:proofa} and \ref{sec:proofb}.

\subsection{Inverse limit space}

For certain dynamical aspects, such as unstable directions, we need to analyze the past orbit of a point. Since every point has more than one preimage, there are several choices of past, and we can make each one of these choices a point on a new space, defined as follows.

\begin{definition}
Let $(X,d)$ be a compact metric space and $f: X \to X$ continuous. The \emph{inverse limit space} (or \emph{natural extension}) to the triple $X$, $d$ and $f$ is
\begin{itemize}
    \item $\Tilde{X} = \{\Tilde{x} = (x_k) \in X^\mathbb{Z}: x_{k+1} = f(x_k) \mbox{, } \forall k \in \mathbb{Z}\}$,
    \item $(\Tilde{f}(\Tilde{x}))_k = x_{k+1}$ $\forall k \in \mathbb{Z}$ e $\forall \Tilde{x} \in \Tilde{X}$, 
    \item $\Tilde{d}(\Tilde{x}, \Tilde{y}) = \sum\limits_k \dfrac{d(x_k, y_k)}{2^{|k|}}$.
\end{itemize}
\end{definition}

With this definition, $(\Tilde{X},\Tilde{d})$ is a compact metric space and the shift $\Tilde{f}$ is continuous and invertible. If $\pi: \tilde{M} \to M$ is the projection on the 0th coordinate, $\pi(\tilde{x}) = x_0$, then $\pi$ is continuous.

There have been several advances on the study of non-invertible systems by making use of the inverse limit space. To name a few, it is the natural environment to search for structural stability \cite{berger2013inverse, berger2017structural}, and it provides tools to explore the measure-theoretical properties of these systems \cite{qian2009smooth}. Although the inverse limit space is not a manifold in general, in the Axiom A (which includes the case of Anosov Endomorphisms) case it can be stratified by finitely many laminations whose leaves are unstable sets \cite[\S 4.2]{berger2013inverse}.

In the torus case, the inverse limit space has a very specific algebraic and topological structure, more precisely, the inverse limit space has the structure of a compact connected abelian group with finite topological dimension, which is called a \emph{solenoidal group} \cite[\S 7.2]{aoki1994topological}. It can also be seen as a fiber bundle $(\tilde{X}, X, \pi, C)$, where the fiber $C$ is a Cantor set \cite[Theorem 6.5.1]{aoki1994topological}.

\subsection{Anosov endomorphisms: Some properties}

Let $M$ be a closed $C^\infty$ Riemannian manifold. 

\begin{definition}[\cite{przytycki1976anosov}]
Let $f: M \to M$ be a local $C^1$ diffeomorphism. $f$ is an \emph{Anosov endomorphism} (or \emph{uniformly hyperbolic endomorphism}) if, for all $\tilde{x} \in \tilde{M}$, there is, for all $i \in \mathbb{Z}$, a splitting $T_{x_i}M = E^u(x_i) \oplus E^s(x_i)$ such that
\begin{itemize}
    \item $Df(x_i)E^{u}(x_i) = E^{u}(x_{i+1})$;
    \item $Df(x_i)E^{s}(x_i) = E^{s}(x_{i+1})$;
    \item there are constants $c > 0$ and $\lambda >1$ such that, for a Riemannian metric on $M$,
    
    $||Df^{n}(x_i)v|| \geq c^{-1} \lambda^{n} ||v||$, $\forall v \in E^u(x_i)$, $\forall i \in \mathbb{Z}$,

$||Df^{n}(x_i)v|| \leq c \lambda^{-n} ||v||$, $\forall v \in E^s(x_i)$, $\forall i \in \mathbb{Z}$.
\end{itemize}
$E^{s/u}(x_i)$ is called the \emph{stable/unstable direction} for $x_i$. When it is needed to make explicit the choice of past orbit, we denote by $E^u(\tilde{x})$ the unstable direction at the point $\pi(\tilde{x}) = x_0$ with respect to the orbit $\tilde{x}$.
\end{definition}

This definition includes Anosov diffeomorphisms (if $f$ is invertible) and expanding maps (if $E^u(x) = T_{x}M$ for each $x$). Throughout this work, however, we consider the case in which $E^s$ is not trivial, excluding the expanding case, unless it is mentioned otherwise.

A point can have more than one unstable direction under an Anosov endomorphism, even though the stable direction is always unique. Indeed, we find in \cite{przytycki1976anosov} an example in which a point has uncountable many unstable directions.

Note that there is not a global splitting of the tangent space; the splitting is along a given orbit $\tilde{x} = (x_i)$ on $\tilde{M}$. But $\tilde{x}$ induces a hyperbolic sequence, and one can apply the Hadamard--Perron theorem in the same way it is done for Anosov diffeomorphisms to prove that $f$ has \emph{local stable} and \emph{local unstable manifolds}, denoted by $W_{f,R}^s(\tilde{x})$ and $W_{f,R}^s(\tilde{x})$, tangent to the stable and unstable directions \cite[Theorem 2.1]{przytycki1976anosov}. Additionally, for $R > 0$ sufficiently small, these manifolds are characterized by
$$W^s_{f,R}(\tilde{x}) = \{y \in M: \forall k \geq 0 \mbox{, } d(f^k(y), f^k(x_0)) < R\}$$
and
$$W^u_{f,R}(\tilde{x}) = \{y \in M: \exists \tilde{y} \in \tilde{M} \mbox{ such that } \pi(\tilde{y}) = y \mbox{ e }\forall k \geq 0 \mbox{, } d(y_{-k}, x_{-k}) < R\}.$$
The \emph{global stable/unstable manifolds} are
$$W^s_{f}(\tilde{x}) = \{y \in M: d(f^k(y), f^k(x_0)) \xrightarrow[]{k \to \infty} 0\}$$
and
$$W^u_{f}(\tilde{x}) = \{y \in M: \exists \tilde{y} \in \tilde{M} \mbox{ such that } \pi(\tilde{y}) = y \mbox{ e } d(y_{-k}, x_{-k})  \xrightarrow[]{k \to \infty} 0\}.$$
Moreover, these manifolds are as regular as $f$. The stable manifolds do not depend on the choice of past orbit for $x_0$, but the unstable ones do.

In the case that the unstable directions do not depend on $\tilde{x}$, that is, $E^u(\tilde{x}) = E^u(\tilde{y})$ for any $\tilde{x}, \tilde{y} \in \tilde{M}$ with $x_0 = y_0$, then we say that $f$ is a \emph{special Anosov endomorphism}. Hyperbolic toral endomorphisms are examples of special Anosov endomorphisms, as the unstable direction of each point is given by its unstable eigenspace.

The fact that Anosov endomorphisms that are not invertible or expanding maps are not structurally stable was proven by R. Mañé and C. Pugh \cite{mane1975stability} and F. Przytycki \cite{przytycki1976anosov} in the 1970's, when they introduced the concept of Anosov endomorphisms as we know today. Mañé and Pugh also proved the following proposition, which is very useful to generalize properties of Anosov diffeomorphisms to endomorphisms.

\begin{proposition}[\cite{mane1975stability}]
    \label{prop:F-anosov}
    Let $\overline{M}$ be the universal cover of $M$ and $F: \overline{M} \to \overline{M}$ a lift for $f$. Then $f$ is an Anosov endomorphism if and only if $F: \overline{M} \to \overline{M}$ is an Anosov diffeomorphism. Additionally, the stable bundle of $F$ projects onto that of $f$.
\end{proposition}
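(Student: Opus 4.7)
My plan is to prove this in two stages. First, to show $F$ is a diffeomorphism of $\overline{M}$, I would use that $f$, being a local diffeomorphism of a closed manifold, is automatically a finite-degree covering map. Then $p\circ F=f\circ p$ is a composition of coverings, hence itself a covering, and $F$ is a lift of it to the simply connected $\overline{M}$. After choosing a base point $\bar{x}_0\in\overline{M}$, a standard path-lifting construction produces a continuous two-sided inverse for $F$: given $\bar{y}\in\overline{M}$, pick a path $\gamma$ from $F(\bar{x}_0)$ to $\bar{y}$, push it down to $M$ via $p$, lift the resulting path back through the covering $f$ starting at $p(\bar{x}_0)$, and lift that path to $\overline{M}$ starting at $\bar{x}_0$; the endpoint gives the value of the inverse at $\bar{y}$, well defined because $\overline{M}$ is simply connected. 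Combined with $F$ being a local diffeomorphism, this gives that $F$ is a diffeomorphism.

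Equip $\overline{M}$ with the Riemannian metric pulled back from $M$, so that $p$ is a local isometry and $Dp$ identifies tangent spaces isometrically. For the implication $f$ Anosov endomorphism $\Rightarrow$ $F$ Anosov diffeomorphism: for each $\bar{x}\in\overline{M}$, the full $F$-orbit $(\bar{x}_i=F^i(\bar{x}))_{i\in\mathbb{Z}}$ projects to a full $f$-orbit, yielding a specific $\tilde{x}\in\tilde{M}$. The Anosov endomorphism hypothesis supplies a hyperbolic splitting $T_{x_i}M=E^s(x_i)\oplus E^u(\tilde{x})$ along this orbit; pulling back through $(Dp)^{-1}$ produces a $DF$-invariant splitting of $T_{\bar{x}_i}\overline{M}$ with identical contraction and expansion rates, and uniformity on $M$ by compactness forces uniformity on $\overline{M}$.

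For the converse, assume $F$ is Anosov with splitting $\overline{E}^s\oplus\overline{E}^u$. Deck transformations $\gamma\in\pi_1(M)$ are isometries and, because $F\circ\gamma$ is another lift of $f$, there is a deck $\gamma'$ with $F\circ\gamma=\gamma'\circ F$. Iterating this and using that $D\gamma$ preserves norms gives $\overline{E}^s(\gamma\bar{x})=D\gamma\,\overline{E}^s(\bar{x})$, and since $p\circ\gamma=p$, the subspace $E^s(x):=Dp(\overline{E}^s(\bar{x}))$ is independent of the chosen lift, defining the stable bundle of $f$ and proving the last sentence of the proposition. The construction of $E^u(\tilde{x})$ for an arbitrary past orbit $\tilde{x}$ is the main obstacle, since $\tilde{x}$ need not arise from iterating $F^{-1}$ on any single lift of $x_0$ (lifts of $x_0$ form a countable set, while $\pi^{-1}(x_0)\subset\tilde{M}$ can be uncountable). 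I would circumvent this by using a $Df$-invariant cone field $C^u$ on $M$ defined as the complement of a thin neighborhood of $E^s$; since $E^s$ is globally defined, so is $C^u$, and the uniform hyperbolicity of $F$ inherited via $Dp$ forces $Df$ to map $C^u(x)$ strictly inside $C^u(f(x))$. For each past orbit $\tilde{x}$, iterating the local inverse branches of $f$ determined by $\tilde{x}$ and intersecting the forward images $Df^n_{x_{-n}}(C^u(x_{-n}))$ in $T_{x_0}M$ produces a nested family of cones that contracts to a one-dimensional subspace; this is $E^u(\tilde{x})$, and the required expansion rates are inherited from those of $F^{-1}$ through the local identifications.
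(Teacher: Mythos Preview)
The paper does not supply a proof of this proposition; it simply records the statement with a citation to Ma\~n\'e--Pugh \cite{mane1975stability} and moves on. So there is no argument in the paper to compare yours against.

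Your proposal is a sound reconstruction of the result. The covering-space argument that $F$ is a diffeomorphism is standard and correct, and the direction $f$ Anosov $\Rightarrow$ $F$ Anosov via pulling back the splitting along the projected $F$-orbit is exactly the right idea. For the converse, your observation that the relation $F\circ\gamma=\gamma'\circ F$ together with the isometric action of deck transformations forces $\overline{E}^s$ to be deck-equivariant is the key point and yields both the stable bundle of $f$ and the final sentence of the proposition. Your handling of $E^u(\tilde{x})$ via a cone field around the (now globally defined) complement of $E^s$ is also the standard and correct route: since $E^s$ descends, so does the cone $C^u$, and the hyperbolicity of $F$ pushes down to give strict $Df$-invariance and eventual uniform expansion of $C^u$; the nested intersections $\bigcap_n Df^n_{x_{-n}}(C^u(x_{-n}))$ then furnish the unstable subspace for each $\tilde{x}$.

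One small slip: you say the nested cones contract to a \emph{one-dimensional} subspace, but the proposition is stated for general $M$, where $\dim E^u$ need not be one. The cone argument still works verbatim---the intersection is a subspace of dimension $\dim E^u_F$---so this is only a wording issue, presumably because the ambient paper is about $\mathbb{T}^2$.
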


Most results for Anosov diffeomorphisms require $M$ to be compact. Even though universal covers are not generally compact, since $F$ is a lift for a map on a compact space, $F$ carries some uniformity, which allows us to prove some results that were originally stated for compact spaces for the lifts proven in Proposition \ref{prop:F-anosov} to be Anosov diffeomorphisms.

\begin{proposition}[\cite{micena-tahzibi2016unstable}]
    \label{prop:F-abs-cont} 
    If $f: M \to M$ is a $C^{1+\alpha}$ Anosov endomorphism, $\alpha > 0$, and $F: \overline{M} \to \overline{M}$ is a lift for $f$ to the universal cover, then there are $W^u_F$ and $W^s_F$ absolutely continuous foliations tangent to $E^u_F$ and $E^s_F$.
\end{proposition}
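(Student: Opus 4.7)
The plan is to reduce the statement to the classical Anosov--Sinai absolute continuity theorem for Anosov diffeomorphisms. By Proposition \ref{prop:F-anosov}, $F$ is already a $C^{1+\alpha}$ Anosov diffeomorphism on $\overline{M}$, so we have the invariant splitting $T\overline{M} = E^u_F \oplus E^s_F$ satisfying uniform hyperbolicity estimates. The only obstruction to quoting the classical result directly is non-compactness of $\overline{M}$; what I need to show is that every compactness-dependent constant in the classical argument survives, thanks to the fact that $F$ is equivariant under the deck transformation group $\Gamma$ of the covering $\pi:\overline{M}\to M$.

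First I would record the uniformity statements. Since $F\circ\gamma=\gamma\circ F$ for every $\gamma\in\Gamma$ (deck transformations commute with lifts) and $\Gamma$ acts by isometries of the lifted Riemannian metric, the hyperbolicity constants $c,\lambda$, the angle between $E^u_F$ and $E^s_F$, the $C^{1+\alpha}$ norms of $F$ and $F^{-1}$, and the local diameters of unstable/stable plaques are all $\Gamma$-invariant and descend to well-defined quantities on the compact quotient $M$. In particular they are uniformly bounded on all of $\overline{M}$. Applying the Hadamard--Perron theorem to the uniformly hyperbolic sequence determined by each orbit gives local stable and unstable manifolds $W^{s/u}_{F,R}(x)$ of a uniform size $R>0$, tangent to $E^{s/u}_F(x)$, varying continuously in $x$, and the distributions $E^{s/u}_F$ are $\alpha'$-Hölder continuous for some $\alpha'>0$ with a uniform Hölder constant (the standard estimate, e.g. from the invariant section theorem, depends only on the hyperbolicity data and $\|F\|_{C^{1+\alpha}}$).

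Next I would integrate these local manifolds into global foliations $W^u_F$ and $W^s_F$. Because the local leaves are $F$-invariant and of uniform size, $W^\sigma_F(x)=\bigcup_{n\ge 0}F^{n}(W^\sigma_{F,R}(F^{-n}(x)))$ for $\sigma=u$ (and analogously with $F^{-n}$ for $\sigma=s$) defines an immersed $C^{1+\alpha'}$ submanifold, and the collection of these leaves is a $\Gamma$-invariant foliation of $\overline{M}$.

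For absolute continuity I would run the classical Anosov--Sinai holonomy argument verbatim. Pick transversals $\Sigma_1,\Sigma_2$ to $W^u_F$ inside a foliation chart and consider the unstable holonomy $h^u:\Sigma_1\to\Sigma_2$. The standard computation expresses the Jacobian of $h^u$ at $x$ as the absolutely convergent infinite product
\begin{equation*}
\mathrm{Jac}(h^u)(x)=\prod_{n=0}^{\infty}\frac{\mathrm{Jac}(F^{-1}|E^s)(F^{-n}(x))}{\mathrm{Jac}(F^{-1}|E^s)(F^{-n}(h^u(x)))},
\end{equation*}
and convergence/positivity follows from (i) the uniform contraction of $F^{-1}$ on $E^s$, (ii) uniform Hölder continuity of $E^s_F$, and (iii) the uniform $C^{\alpha}$ norm of $DF$ on $\overline{M}$. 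All three are $\Gamma$-invariant and hence available in the cover exactly as in the compact case. This produces a positive, bounded density for $h^u$, giving absolute continuity of $W^u_F$; the argument for $W^s_F$ is symmetric using $F^n$ in place of $F^{-n}$.

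The only real obstacle is the bookkeeping to verify that every constant in the classical proof is genuinely $\Gamma$-invariant and descends to $M$; once this is noted, no new analytic ingredient is required beyond the standard Anosov--Sinai proof.
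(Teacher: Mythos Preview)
Your proposal is correct and matches the paper's approach exactly: the paper simply remarks that the classical absolute-continuity proof for Anosov diffeomorphisms goes through verbatim because $F$ projects to the compact $M$, so its derivatives are periodic with respect to compact fundamental domains and all the needed constants are uniform. One small correction: lifts of non-invertible maps do \emph{not} commute with deck transformations in general (one only has $F\circ\gamma=\sigma(\gamma)\circ F$ for a homomorphism $\sigma:\Gamma\to\Gamma$ induced by $f_*$ on $\pi_1$), but since $\Gamma$ acts by isometries this weaker equivariance already gives the $\Gamma$-invariance of the hyperbolicity constants, $C^{1+\alpha}$ norms, and H\"older data that your argument uses.
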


The above proposition is stated in \cite{micena-tahzibi2016unstable} as Lemma 4.1, and the absolute continuity is defined on Subsection \ref{sec:disin}. The proof is the same as for the compact case, as $F$ projects on the torus, then its derivatives are periodic with respect to compact fundamental domains. With the same argument, we can prove the following just as it is in \cite[\S 19.1]{katok1997introduction}.

\begin{proposition}
    \label{prop:F-C1-hol}
    Let $f: M \to M$ be an Anosov endomorphism and $F: \overline{M} \to \overline{M}$ a lift for $f$ to the universal cover. If the unstable distribution of $F$ has codimension one, then it is $C^1$.
\end{proposition}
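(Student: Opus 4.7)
The plan is to carry over the classical proof that a codimension-one invariant distribution of a $C^2$ Anosov diffeomorphism on a compact manifold is $C^1$, as presented in Katok--Hasselblatt §19.1, to the non-compact lift $F:\overline{M}\to\overline{M}$. Although $\overline{M}$ is not compact, $F$ commutes with the deck-transformation action of $\pi_1(M)$, so every geometric quantity attached to $F$ (the norms $\|DF^{\pm 1}\|$, the hyperbolic rates, the angle between $E^s_F$ and $E^u_F$, and the H\"older constants produced by Proposition \ref{prop:F-anosov}) is periodic under that action and therefore uniformly controlled on $\overline{M}$: any supremum over $\overline{M}$ equals a supremum over a compact fundamental domain for $M$.

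Working in a finite atlas of charts on $M$ pulled back to $\overline{M}$, the codimension-one distribution $E^u_F$ is recorded as a function $x\mapsto\theta^u(x)$ with values in the appropriate Grassmannian; for the application $M=\mathbb{T}^2$ this is just $\mathbb{RP}^1\cong S^1$, an angle. The invariance $DF(x)\cdot E^u_F(x)=E^u_F(F(x))$ becomes a functional equation $\theta^u\circ F=\Phi_F(\,\cdot\,,\theta^u)$, where $\Phi_F$ is a smooth cocycle built from $DF$ in coordinates. Reading this equation backwards along the dynamics produces a graph-transform operator $\mathcal{T}_F$ acting on sections lying inside a cone field around a H\"older candidate; by hyperbolicity of $F$ and the uniform bounds above, $\mathcal{T}_F$ is a contraction in $C^0$, and its unique fixed point is $\theta^u$.

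The improvement from continuous to $C^1$ is where codimension one enters decisively. Differentiating the functional equation once yields an induced operator on $D\theta^u$ whose contraction rate is governed by the ratio between the expansion along $E^u_F$ and the contraction along $E^s_F$; in the codimension-one case the bunching inequality required for this operator to contract is satisfied automatically by hyperbolicity alone (this is the key computation in the compact argument). The differentiated operator is therefore itself a contraction in $C^0$, its unique continuous fixed point necessarily coincides with $D\theta^u$, and one concludes $\theta^u\in C^1$, so $E^u_F$ is a $C^1$ distribution. The main obstacle is bookkeeping: at every step where the textbook proof appeals to compactness of $M$ one has to check that the corresponding $\sup$ over $\overline{M}$ is finite, which follows systematically from the $\pi_1(M)$-equivariance of $F$ and of the atlas. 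This is routine but needs to be done uniformly throughout the argument.
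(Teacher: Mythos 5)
Your proposal follows exactly the paper's route: the paper's entire justification is that the classical codimension-one $C^1$-regularity argument of Katok--Hasselblatt \S 19.1 transfers verbatim to the universal cover because $F$, being a lift of a map on a compact manifold, has all its derivatives, hyperbolicity constants, and angles periodic under the deck group and hence uniformly controlled over a compact fundamental domain. You flesh out the content of the cited argument (graph transform, differentiation of the invariance equation, bunching automatic in codimension one), but the key idea and the source are the same as the paper's.
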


In particular, if $\dim{M} = 2$, then we can apply the arguments in Proposition \ref{prop:F-C1-hol} for $F$ and $F^{-1}$, since $F$ is invertible, and both the stable and unstable distributions are $C^1$. This also implies that the stable and unstable holonomies (see Definition \ref{def:hol}) are $C^1$, . In general, these distributions and the associated holonomies are Hölder continuous (see \cite[\S 19.1]{katok1997introduction}). 

\begin{definition}
    \label{def:hol}
     Given a foliation $\mathcal{F}$, we define the \emph{holonomy} $h_{\Sigma_1, \Sigma_2}: \Sigma_1 \to \Sigma_2$ between two local discs $\Sigma_1$ and $\Sigma_2$ transverse to $\mathcal{F}$ by $q \mapsto \mathcal{F}(q) \cap \Sigma_2$, where $\mathcal{F}(q)$ is the leaf of $\mathcal{F}$ containing q.
\end{definition}

That is, a holonomy moves the point $q$ through its leaf on $\mathcal{F}$. For Anosov endomorphisms, we have transverse foliations on the universal cover, so the \emph{stable holonomy} can have local unstable leaves as the discs. When there is no risk of ambiguity, we denote it simply by $h^s$. The same goes for the unstable holonomy.

Another important feature of Anosov endomorphisms on tori is transitivity. The following theorem is a consequence of results in \cite{aoki1994topological} for \emph{topological Anosov maps}, which are continuous surjections with some kind of expansiveness and shadowing property. Anosov endomorphisms are particular cases of topological Anosov maps that are differentiable.

\begin{proposition}[\cite{aoki1994topological}]
    \label{prop:transitive}
    Every Anosov endomorphism on $\mathbb{T}^n$ is transitive.
\end{proposition}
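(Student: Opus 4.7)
The plan is to reduce transitivity of $f$ to that of its linearization $A:\mathbb{T}^n\to\mathbb{T}^n$, which is a hyperbolic toral endomorphism. The route goes through the inverse limit systems and the Aoki--Hiraide inverse-limit conjugacy between a topological Anosov map and its linearization; since any $C^1$ Anosov endomorphism is in particular a topological Anosov map (it is expansive and has the shadowing property through the standard Hadamard--Perron arguments), this will apply to $f$.

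First, I would establish topological mixing of $A$. The map $A$ preserves Haar measure on $\mathbb{T}^n$, and the standard Fourier-analytic argument --- using that distinct characters of $\mathbb{T}^n$ are eventually separated by the iterates of the integer matrix representing $A^{\top}$ acting on $\mathbb{Z}^n$, thanks to hyperbolicity --- shows that $A$ is mixing with respect to Haar measure; since Haar measure has full support, topological mixing follows. A routine verification using cylinder bases for the product topology then yields that the natural extension $\tilde A$ is also topologically mixing, hence transitive.

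Next, I would invoke the Franks--Manning-type classification for endomorphisms (Chapter 7 of \cite{aoki1994topological}): there exists a homeomorphism $H:\widetilde{\mathbb{T}^n}_f\to\widetilde{\mathbb{T}^n}_A$ between the two inverse limit solenoidal groups, homotopic to the identity in the appropriate sense, satisfying $H\circ\tilde f=\tilde A\circ H$. This transfers topological transitivity from $\tilde A$ to $\tilde f$.

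Finally, I would push transitivity down: if $\tilde x\in\widetilde{\mathbb{T}^n}_f$ has dense $\tilde f$-orbit, then $x_0=\pi(\tilde x)\in\mathbb{T}^n$ has dense $f$-orbit, because the projection $\pi$ onto the $0$th coordinate is continuous and surjective with $f\circ\pi=\pi\circ\tilde f$: any subsequence $\tilde f^{n_k}(\tilde x)\to\tilde y$ gives $f^{n_k}(x_0)\to\pi(\tilde y)$, and $\pi(\tilde y)$ ranges over all of $\mathbb{T}^n$ as $\tilde y$ ranges over $\widetilde{\mathbb{T}^n}_f$. The main obstacle is the second step, namely constructing the Aoki--Hiraide conjugacy $H$; this requires combining pseudo-orbit tracing and expansivity on the inverse limit with a global comparison to the linearization in the universal cover, which is the technical heart of the argument. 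The other steps are routine.
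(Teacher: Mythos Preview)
Your argument is correct: topological mixing of the linearization $A$ lifts to the natural extension $\tilde A$, the inverse-limit conjugacy of Aoki--Hiraide (Theorem~6.8.1 in \cite{aoki1994topological}, rather than Chapter~7) transports transitivity to $\tilde f$, and transitivity descends along the factor map $\pi$. Each step is sound, and the technical input --- the inverse-limit conjugacy --- is indeed the same black box the paper relies on elsewhere.

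The paper, however, takes a shorter and rather different route: it simply quotes \cite[Theorem~8.3.5]{aoki1994topological}, which asserts directly that any topological Anosov map on $\mathbb{T}^n$ has nonwandering set equal to $\mathbb{T}^n$, and then appeals to the spectral decomposition for topological Anosov maps (together with connectedness of $\mathbb{T}^n$) to conclude transitivity. So the paper treats transitivity as an immediate corollary of a single structural theorem about $\Omega(f)$, whereas you construct it by hand from the dynamics of the linear model. Your approach has the advantage of making the source of transitivity transparent --- it is inherited from the hyperbolic linear endomorphism --- and of not requiring the spectral decomposition machinery; the paper's approach is terser and avoids the detour through the solenoid and the Fourier argument for mixing of $A$. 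Both rely on comparably heavy results from \cite{aoki1994topological}; they simply choose different ones.
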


\begin{proof}
    By \cite[Theorem 8.3.5]{aoki1994topological}, every topological Anosov map $f$ on $\mathbb{T}^n$ has its nonwandering set as the whole manifold, that is, $\Omega(f) = \mathbb{T}^n$, which implies transitivity.
\end{proof}

\subsection{Conjugacy}

We say that $A: \mathbb{T}^n \to \mathbb{T}^n$ is the \emph{linearization} of an Anosov endomorphism $f: \mathbb{T}^n \to \mathbb{T}^n$ if $A$ is the unique linear toral endomorphism homotopic to $f$. Much of the behavior of $f$ can be inferred by the one of $A$. In fact, if $f$ is invertible or expansive, $f$ and $A$ are topologically conjugate. In the more general non-invertible setting, this conjugacy does not exist if $f$ is not special, since a conjugacy should preserve stable and unstable manifolds.

The version of Theorem \ref{teo:a} given by F. Micena \cite[Theorem B]{micena2020rigidity} requires the Anosov endomorphism to be \emph{strongly special} --- that is, each point only has one unstable direction and $W_f^s(x)$ is dense for each $x \in M$ ---  in order to guarantee the existence of conjugacy with its linearization and to prove that it is smooth. This relies on Proposition \ref{prop:conj} stated below and given by Aoki and Hiraide in \cite{aoki1994topological} for topological Anosov maps.

\begin{proposition}[\cite{aoki1994topological}]
    \label{prop:conj}
    If $f: \mathbb{T}^n \to \mathbb{T}^n$ is a strongly special Anosov endomorphism, then its linearization $A$ is hyperbolic and $f$ is topologically conjugate to $A$.
\end{proposition}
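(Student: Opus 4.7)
The plan is to follow the classical Franks--Manning scheme, adapted to the non-invertible setting by Aoki and Hiraide. I would first lift $f$ to $F: \mathbb{R}^n \to \mathbb{R}^n$, which is an Anosov diffeomorphism by Proposition \ref{prop:F-anosov}, and let $A: \mathbb{R}^n \to \mathbb{R}^n$ denote the linear lift of the linearization. Since $f$ and its linearization are homotopic as self-maps of $\mathbb{T}^n$, the difference $F - A$ is bounded and $\mathbb{Z}^n$-periodic.

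Next, I would establish that $A$ is hyperbolic. The argument is the classical one: $F$ being Anosov at bounded distance from the linear map $A$ forces, via a shadowing/periodic-orbit comparison, that no eigenvalue of $A$ may lie on the unit circle; equivalently, the expansion/contraction rates of $F$ on its invariant distributions are inherited by $A$ through the homotopy.

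With $A$ hyperbolic, I would construct the semi-conjugacy by solving the cohomological equation $A \phi - \phi \circ F = F - A$ with $\phi$ bounded and $\mathbb{Z}^n$-periodic. Splitting both sides along the decomposition $\mathbb{R}^n = E^s_A \oplus E^u_A$ and iterating forward/backward yields a unique such $\phi$. Then $H := \mathrm{Id} + \phi$ satisfies $A \circ H = H \circ F$ and, by $\mathbb{Z}^n$-periodicity, descends to a continuous map $h: \mathbb{T}^n \to \mathbb{T}^n$ with $A \circ h = h \circ f$. Surjectivity of $h$ is immediate since $H$ is a bounded perturbation of the identity on $\mathbb{R}^n$, hence proper.

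The main obstacle is to promote $h$ to a homeomorphism, and this is where the strongly special hypothesis is essential. Uniqueness of the unstable direction at every point upgrades $E^u_f$ to a genuine foliation of $\mathbb{T}^n$ (not merely on the inverse limit), and $h$ maps $W^u_f$-leaves into $W^u_A$-leaves. Coupled with the density of $W^s_f(x)$ for each $x \in \mathbb{T}^n$, this foliated structure on $\mathbb{T}^n$ should allow a transitivity argument showing that $h^{-1}(p)$ is a single point for every $p$: any two distinct preimages would be forced to lie on a common stable--unstable configuration whose exponential geometry contradicts $h$ being at bounded distance from the identity. I expect this injectivity step---where specialness, density of stable manifolds, and the foliated structure on the universal cover must be combined---to be the most delicate part of the argument.
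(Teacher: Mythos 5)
The paper does not actually prove this proposition; it is stated and cited from Aoki--Hiraide, so there is no in-paper argument to compare against. What can be assessed is whether your sketch would go through, and there is a genuine gap at the descent step.

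You assert that the bounded solution $\phi$ of $A\phi - \phi\circ F = F - A$ on $\mathbb{R}^n$ is automatically $\mathbb{Z}^n$-periodic, so that $H = \mathrm{Id} + \phi$ descends to $h\colon \mathbb{T}^n\to\mathbb{T}^n$. That is true in the invertible Franks--Manning setting but fails for endomorphisms. Splitting along $E^s_A\oplus E^u_A$, the unstable part $\phi^u = \sum_{k\ge 0} A^{-k-1}(F-A)^u\circ F^k$ is periodic because $F^k(x+m) = F^k(x)+A^k m$ with $A^k m\in\mathbb{Z}^n$; but the stable part must be a backward sum $\phi^s = -\sum_{k\ge 1}A^{k-1}(F-A)^s\circ F^{-k}$, and $F^{-k}(x+m) = F^{-k}(x)+A^{-k}m$ with $A^{-k}m\notin\mathbb{Z}^n$ as soon as $\lvert\det A\rvert>1$, so the summands are not periodic, and one cannot instead iterate forward on $\mathbb{T}^n$ since $A^{-1}$ expands $E^s_A$ and $f^{-1}$ does not exist. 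Thus $\phi$ need not be periodic and $H$ need not descend. This is precisely the content of the paper's Proposition~\ref{prop:semiconj-special}: the mere existence of a continuous surjection $h$ on $\mathbb{T}^n$ with $h\circ f = A\circ h$ already forces $f$ to be special, and the proof there shows that descent of $H$ is equivalent to deck-invariance of the unstable leaves of $F$. So the special hypothesis has to be invoked to get the descent itself, not only at the final injectivity step as your outline suggests. Your remarks on hyperbolicity of $A$ and on $H$ being a homeomorphism of $\mathbb{R}^n$ via expansiveness match what the paper records from \cite[Propositions 8.2.1 and 8.4.2]{aoki1994topological}, but the core difficulty that Aoki--Hiraide actually address --- projecting $H$ to the torus --- is treated as free in your sketch.
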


For Anosov diffeomorphisms, the density of the stable or unstable leaves of each point is equivalent to transitivity. Whether every Anosov diffeomorphism is transitive is still an open question.

However, for general Anosov endomorphisms even in the transitive case, the stable manifolds may be not dense. For example, consider the linear Anosov endomorphism on $\mathbb{T}^3$ induced by the matrix
$$A = \left(\begin{matrix} 
    2 & 1 & 0\\
    1 & 1 & 0\\
    0 & 0 & 2
\end{matrix}\right).$$
It is easy to check that $\dim{E^u}=2$, $\dim{E^s}=1$, $A: \mathbb{T}^3 \to \mathbb{T}^3$ is transitive and $W^u_A(x)$ is dense in $\mathbb{T}^3$ for each $x$, but, if $x = (x_1, x_2, x_3) \in \mathbb{T}^3$, $W^s_A(x)$ is restricted to $\mathbb{T}^2 \times \{x_3\}$, then it is not dense.

In the same year, Naoya Sumi proved that the hypothesis of density on the stable set is not required \cite{sumi1994linearization}. More recently, Moosavi and Tajbakhsh \cite{moosavi2019classification}, very similarly to Aoki and Hiraide and to Sumi, extended this result to topological Anosov maps on nil-manifolds. As a consequence, we have the following.

\begin{proposition}[\cite{sumi1994linearization, moosavi2019classification}]
    An Anosov endomorphism $f: \mathbb{T}^n \to \mathbb{T}^n$ is special if and only if it is conjugate to its linearization by a map $h: \mathbb{T}^n \to \mathbb{T}^n$ homotopic to $Id$.
\end{proposition}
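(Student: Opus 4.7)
The plan is to handle the two implications separately, with the forward direction being essentially the content of the cited works.

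For the implication $(\Rightarrow)$, assume $f$ is special. My proposal is to invoke Sumi's refinement of Proposition \ref{prop:conj}: inspection of the Aoki--Hiraide argument shows that the density of stable manifolds (the extra condition making ``special'' into ``strongly special'') is used only to force injectivity of a semi-conjugacy, and Sumi showed this density hypothesis is superfluous when $f$ is merely special. One can reproduce the construction concretely by lifting $f$ to the Anosov diffeomorphism $F:\mathbb{R}^n\to\mathbb{R}^n$ given by Proposition \ref{prop:F-anosov} and performing a Franks--Manning-type argument: since $F$ differs from the linear lift of $A$ by a $\mathbb{Z}^n$-periodic function, one produces $H:\mathbb{R}^n\to\mathbb{R}^n$ with $H-\mathrm{Id}$ bounded and $H\circ F=A\circ H$. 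Boundedness of $H-\mathrm{Id}$ forces $H(x+v)=H(x)+v$ for every $v\in\mathbb{Z}^n$, so $H$ descends to $h:\mathbb{T}^n\to\mathbb{T}^n$ homotopic to the identity with $h\circ f=A\circ h$. The main obstacle is injectivity of $h$: if $f$ were not special, distinct past orbits would produce distinct unstable leaves through a common point, while $A$ admits a unique unstable leaf through each point, making a conjugacy impossible. Specialness is exactly what reconciles these two pictures and promotes the semi-conjugacy to a conjugacy.

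For the implication $(\Leftarrow)$, assume $h:\mathbb{T}^n\to\mathbb{T}^n$ is a topological conjugacy homotopic to the identity, $h\circ f=A\circ h$. Lift $h$ to a homeomorphism $\tilde h:\tilde{\mathbb{T}^n}\to\tilde{\mathbb{T}^n}$ of inverse limits by $\tilde h(\tilde x)=(h(x_k))_{k\in\mathbb{Z}}$, which conjugates $\tilde f$ to $\tilde A$. Since a topological conjugacy between hyperbolic systems sends global unstable sets to global unstable sets, one has $h(W^u_f(\tilde x))=W^u_A(\tilde h(\tilde x))$ for every $\tilde x\in\tilde{\mathbb{T}^n}$. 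Now the linearization $A$ has a constant unstable distribution, namely its unstable eigenspace $E^u_A\subset\mathbb{R}^n$, so $W^u_A(\tilde h(\tilde x))$ is the projection to $\mathbb{T}^n$ of the affine subspace $h(x_0)+E^u_A$, which depends only on $h(x_0)$ and not on the past coordinates of $\tilde h(\tilde x)$. Since $h$ is a bijection of $\mathbb{T}^n$, the leaf $W^u_f(\tilde x)=h^{-1}(h(x_0)+E^u_A)$ depends only on $x_0$. Differentiating (or using the characterization of $E^u_f(\tilde x)$ as the tangent space at $x_0$ of $W^u_f(\tilde x)$) shows that the unstable direction $E^u_f(\tilde x)$ depends only on $\pi(\tilde x)=x_0$, which is precisely the definition of $f$ being special.
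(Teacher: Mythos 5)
Your proof of the $(\Leftarrow)$ direction is sound and is in fact close in spirit to the paper's own Proposition~\ref{prop:semiconj-special}, which shows that $A$ being a factor of $f$ on $\mathbb{T}^n$ already forces $f$ to be special. The paper argues on the universal cover using the equivariance $H(x+a)=H(x)+Ba$ of a lift of $h$, whereas you argue directly in the inverse limit, but the underlying idea (unstable leaves of $A$ depend only on the $0$th coordinate, and a conjugacy transports this fact to $f$) is the same and your version is fine.

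The $(\Rightarrow)$ direction, however, has a genuine gap at the sentence ``Boundedness of $H-\mathrm{Id}$ forces $H(x+v)=H(x)+v$ for every $v\in\mathbb{Z}^n$.'' This is false as stated, and the error mislocates where specialness is actually needed. Aoki--Hiraide already give, for \emph{any} Anosov endomorphism homotopic to $A$, a bi-uniformly continuous homeomorphism $H:\mathbb{R}^n\to\mathbb{R}^n$ with $H\circ F = A\circ H$ and $H-\mathrm{Id}$ bounded (Propositions 8.2.1 and 8.4.2 of \cite{aoki1994topological}, quoted in this paper) --- so the ``main obstacle of injectivity'' you mention is automatically resolved on the cover. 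The obstruction to obtaining a conjugacy on $\mathbb{T}^n$ is precisely the $\mathbb{Z}^n$-equivariance of $H$, which boundedness alone cannot give. Concretely, setting $\phi_v(x)=H(x+v)-H(x)-v$, the functional equation yields $A\,\phi_v(x)=\phi_{Av}(F(x))$; the uniform bound $\|\phi_w\|_\infty\le 2\|H-\mathrm{Id}\|_\infty$ then kills the unstable component of $\phi_v$ upon iteration, but the stable component receives no such constraint, and in general $\phi_v\neq 0$. Indeed, if $f$ is not special, then $H$ still exists and is a homeomorphism of $\mathbb{R}^n$, yet it does \emph{not} descend to $\mathbb{T}^n$; distinct lifts $x$ and $x+v$ of the same torus point can have genuinely different unstable leaves under $F$, and since $H$ must map these to parallel translates $W^u_A(H(x))$ and $W^u_A(H(x+v))$, one cannot have $H(x+v)=H(x)+v$ while keeping these leaves distinct modulo $\mathbb{Z}^n$. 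Establishing the equivariance of $H$ under the specialness hypothesis is exactly the nontrivial content of Sumi's and Moosavi--Tajbakhsh's theorems; invoking them as a citation is fine, but the sketched mechanism you offer in place of their argument does not work.
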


Even a small perturbation of a special Anosov endomorphism may be not special, and a perturbation can, in fact, have uncountable many unstable directions. This is an obstruction to topological conjugacy. On the universal cover, however, we do have a conjugacy.

If $f: \mathbb{T}^n \to \mathbb{T}^n$ is an Anosov endomorphism, by \cite[Proposition 8.2.1]{aoki1994topological} there is a unique continuous surjection $H: \mathbb{R}^n \to \mathbb{R}^n$ on the universal cover with 
    \begin{itemize}
        \item $A \circ H = H \circ F$;
        \item $H$ is uniformly close to $Id$;
        \item $H$ is uniformly continuous.
    \end{itemize}
And, by \cite[Proposition 8.4.2]{aoki1994topological}, $H^{-1}$ exists and it is uniformly continuous, regardless of the distance between $f$ and $A$. One of the key properties to guarantee the invertibility of $H$ is expansiveness. In fact, these two results hold for $f$ topological Anosov map on the $n$-torus.
We can only project $H$ to the torus if $f$ is special. In fact, one can show that even the existence of a semiconjugacy with the linearization on $\mathbb{T}^n$ implies that $f$ is special.

\begin{proposition}
    \label{prop:semiconj-special}
    Let $f: \mathbb{T}^n \to \mathbb{T}^n$ be an Anosov endomorphism and $A$ its linearization. If $A$ is a factor of $f$, then $f$ is special.
\end{proposition}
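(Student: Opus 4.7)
The plan is to leverage the hypothesized semiconjugacy $h$ to force the canonical conjugacy $H:\mathbb{R}^n\to\mathbb{R}^n$ from Proposition 8.2.1 of \cite{aoki1994topological} to descend to a homeomorphism on $\mathbb{T}^n$. Once this is achieved, $f$ is conjugate to $A$ via a map homotopic to the identity, and the Sumi / Moosavi--Tajbakhsh proposition stated just above then instantly yields that $f$ is special.

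First I would fix a factor map $h:\mathbb{T}^n\to\mathbb{T}^n$ with $A\circ h=h\circ f$ and lift it to $\tilde h:\mathbb{R}^n\to\mathbb{R}^n$. Using that $F(x+v)=F(x)+Av$ for $v\in\mathbb{Z}^n$ (since $f$ is homotopic to its linearization $A$) and that $\tilde h(x+v)=\tilde h(x)+Lv$ for some integer matrix $L=h_{*}$, the commutation $A\tilde h=\tilde h\circ F+c$ (modulo a constant $c\in\mathbb{Z}^n$) yields two crucial facts about $L$: first, $L$ is invertible over $\mathbb{R}$, because $h$ being surjective forces $\deg h\neq 0$ and hence $\det L\neq 0$; second, $L$ commutes with $A$, obtained by comparing the deck equivariance of both sides of $A\tilde h=\tilde h F+c$. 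The constant $c$ can be absorbed by replacing $\tilde h$ with $\tilde h+(I-A)^{-1}c$, using the hyperbolicity of $A$ (so that $I-A$ is invertible); this shift preserves the $\mathbb{Z}^n$-equivariance, and thereafter one may assume $A\tilde h=\tilde h\circ F$ exactly.

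The next step is to set $\psi:=L^{-1}\tilde h$. Because $L$ commutes with $A$, $\psi$ is itself a conjugacy: $A\psi=\psi F$; and the equivariance translates to $\psi(x+v)=\psi(x)+v$, so $\psi-\mathrm{Id}$ is $\mathbb{Z}^n$-periodic, hence uniformly bounded and uniformly continuous on $\mathbb{R}^n$, and $\psi$ is surjective (as $\tilde h$ is). By the uniqueness statement in Proposition 8.2.1 of \cite{aoki1994topological}, such a map is unique, so $\psi=H$. Rewriting this as $\tilde h=L\circ H$ and re-reading the equivariance $\tilde h(x+v)=\tilde h(x)+Lv$ gives $L\bigl(H(x+v)-H(x)\bigr)=Lv$; invertibility of $L$ then forces $H(x+v)=H(x)+v$ for every $v\in\mathbb{Z}^n$. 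Thus $H$ is $\mathbb{Z}^n$-equivariant as the identity, and descends to a continuous map $\bar H:\mathbb{T}^n\to\mathbb{T}^n$ homotopic to the identity with $A\bar H=\bar H f$; it is a homeomorphism because $H$ is, by Proposition 8.4.2 of \cite{aoki1994topological}. Applying the preceding proposition to this conjugacy between $f$ and its linearization, we conclude that $f$ is special.

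I expect the main obstacle to be the delicate handling of the equivariance matrix $L$: $L$ need not be the identity (indeed it can be any integer matrix of nonzero determinant commuting with $A$), and one must track how its presence interacts with the uniqueness statement for $H$. The key insight is that dividing by $L$ converts $\tilde h$ into a map that meets all hypotheses of the canonical conjugacy, so uniqueness pins it down; the invertibility of $L$ then propagates back to force $H$ itself to commute with the $\mathbb{Z}^n$-action. A secondary technical point is that both invertibility and $A$-commutation of $L$ rest on separate inputs --- surjectivity of the factor map and the identification of the homology action of $f$ with $A$ --- so one must verify carefully that each of these is really available in the hypotheses.
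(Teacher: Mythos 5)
Your route differs from the paper's in its endgame: rather than showing directly that $p(W^u_F(x))=p(W^u_F(y))$ and invoking Micena--Tahzibi as the paper does, you show that the canonical conjugacy $H$ is $\mathbb{Z}^n$-equivariant, hence descends to a conjugacy on $\mathbb{T}^n$ homotopic to the identity, and then apply the Sumi/Moosavi--Tajbakhsh proposition stated just above. The commutation $LA=AL$, the constant-absorption via $(I-A)^{-1}$, and the identification $\psi=H$ by uniqueness are all sound.

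However, there is a genuine gap in your justification that $L$ is invertible. The implication ``$h$ surjective $\Rightarrow \deg h\neq 0$'' is false for continuous self-maps of tori: a piecewise-linear tent map of $S^1$ is surjective with degree $0$, and its lift to $\mathbb{R}$ is not even surjective, so surjectivity does not propagate naively through the covering. The semiconjugacy structure \emph{does} force invertibility of $L$, but the natural argument proceeds in the opposite order from yours: first establish $\tilde h = L\circ H$ directly, without assuming $L$ invertible --- set $\Phi:=\tilde h\circ H^{-1}$, note $A\Phi=\Phi A$ and $\Phi = L+\rho$ with $\rho$ bounded, derive $A\rho(x)=\rho(Ax)$ from $LA=AL$, and use hyperbolicity of $A$ (boundedness of $A^{\pm n}\rho$ forces $\rho$ to lie in $E^s_A\cap E^u_A=\{0\}$) to conclude $\rho\equiv 0$. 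Then $\tilde h(\mathbb{R}^n)=L(\mathbb{R}^n)$ is a rational subspace, and surjectivity of $h$ gives $L(\mathbb{R}^n)+\mathbb{Z}^n=\mathbb{R}^n$, which fails for any proper rational subspace; hence $\det L\neq 0$. Note this makes the subsequent uniqueness appeal redundant, since $\tilde h = L\circ H$ together with $\det L\neq 0$ already yields $H(x+v)=H(x)+v$. With this repair your proof is complete, but the claim ``surjective implies nonzero degree'' cannot stand as written.
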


\begin{proof}
    If $A$ is a factor of $f$, then there is a continuous surjective map $h: \mathbb{T}^n \to \mathbb{T}^n$ such that $h \circ f = A \circ h$. If $H$ is a lift of $h$ to $\mathbb{R}^n$, then $H(x+a) = H(x) + Ba$ for every $x \in \mathbb{R}^n$ and $a \in \mathbb{Z}^n$, where $B: \mathbb{Z}^n \to \mathbb{Z}^n$. Consider $F, A: \mathbb{R}^n \to \mathbb{R}^n$ lifts of $f$ and $A$.

    Given $x \sim y$, i.e., $y = x + a$ with $a \in \mathbb{Z}^n$, we prove that $p(W^u_F(x)) = p(W^u_F(y))$, where $p: \mathbb{R}^2 \to \mathbb{T}^2$ is the canonical projection. Since $H$ takes unstable leaves of $F$ to unstable lines of $A$, then
    $$H(W^u_F(x)) = W^u_A(H(x)) \mbox{ and }$$
    $$H(W^u_F(y)) = H(W^u_F(x+a)) = W^u_A(H(x+a)) = W^u_A(H(x)) + Ba.$$

    Given any $z \in W^u_F(x)$, we have that $H(z) \in W^u_A(H(x))$, then $$H(z) + Ba \in W^u_A(H(x)) + Ba = H(W^u_F(y)).$$ By \cite{aoki1994topological}, $H$ is invertible, then  $H(z) + Ba = H(z+a)$ and, therefore, $z+a \in W^u_F(y)$. Since $z$ is arbitrary, this proves that $p(W^u_F(x)) \subseteq p(W^u_F(y)))$, and the converse is analogous.

    Thus, the set of unstable directions projected from the universal cover for each point in $\mathbb{T}^n$ is unitary. By \cite[Proposition 2.5]{micena-tahzibi2016unstable}, since the set of all unstable directions for a point is the closure of the ones projected from the universal cover, we conclude that $f$ is special.
\end{proof}

In fact, in the conclusion of the previous result, we get that the unstable leaves on the universal cover are invariant under deck transformations if and only if $f$ is special, and the existence of a semiconjugacy on the torus would imply this invariance.

\subsection{Quasi-isometry}

A property frequently required for foliations in $\mathbb{R}^n$ when studying hyperbolic systems is that of quasi-isometry. It means, roughly speaking, that at a large scale the foliation has a uniform length for two points at a given distance.

\begin{definition}
    Given a foliation $\mathcal{W}$ of $\mathbb{R}^n$, with $d_\mathcal{W}$ the distance along the leaves, we say that $\mathcal{W}$ is \emph{quasi-isometric} if there are constants $a, b > 0$ such that, for every $y \in \mathcal{W}(x)$,
    $$d_\mathcal{W}(x,y) \leq a \Vert x-y \Vert +b.$$
\end{definition}

In particular, if the foliation $\mathcal{W}$ is uniformly continuous, the above definition is equivalent to the existence of $Q > 0$ such that, for every $y \in \mathcal{W}(x)$,
    $$d_\mathcal{W}(x,y) \leq Q \Vert x-y \Vert.$$

\begin{proposition}[\cite{micena-tahzibi2016unstable}]
    \label{prop:F-quasi-iso}
    Let $f: \mathbb{T}^n \to \mathbb{T}^n$ be an Anosov endomorphism $C^1$-close to its linearization $A$. Then $W^u_F$ and $W^s_F$ are quasi-isometric.
\end{proposition}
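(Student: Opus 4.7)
The strategy is to exploit the $C^1$-closeness of $F$ to the lifted linearization $A:\mathbb{R}^n \to \mathbb{R}^n$ in order to trap the unstable distribution $E^u_F$ in a narrow cone about $E^u_A$ uniformly on $\mathbb{R}^n$, and then to promote this infinitesimal closeness to a global graph representation of each leaf of $W^u_F$ over $E^u_A$. Once the leaves are uniformly Lipschitz graphs, quasi-isometry follows from a direct arclength computation.

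First I would use that $DF$ is $\mathbb{Z}^n$-periodic (because $F$ is a lift of a map on the torus) together with the $C^1$-closeness of $f$ to $A$ to conclude that $\Vert DF_x - A \Vert$ is uniformly small on all of $\mathbb{R}^n$. A standard cone-invariance argument for hyperbolic splittings then yields, for any prescribed cone $C^u_\alpha$ of aperture $\alpha > 0$ about $E^u_A$ (and correspondingly $C^s_\alpha$ about $E^s_A$), that $E^u_F(x) \subset C^u_\alpha$ and $E^s_F(x) \subset C^s_\alpha$ for every $x \in \mathbb{R}^n$, provided the $C^1$-distance from $f$ to $A$ is sufficiently small.

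Next I would show that, for each $x \in \mathbb{R}^n$, the leaf $W^u_F(x)$ (translated so that $x$ becomes the origin) is the graph of a Lipschitz map $\phi : E^u_A \to E^s_A$ with $\mathrm{Lip}(\phi) \leq \tan\alpha$. Let $\pi^u : \mathbb{R}^n \to E^u_A$ be the orthogonal projection. The inclusion $E^u_F \subset C^u_\alpha$ guarantees that $\pi^u$ restricted to $W^u_F(x)$ is an immersion with derivative bounded below by $\cos\alpha$. In particular, along any intrinsic arclength parametrization of a leaf segment one has $\Vert \pi^u(\gamma(s)) - \pi^u(\gamma(0)) \Vert \geq s\cos\alpha$, so $\pi^u|_{W^u_F(x)}$ is proper. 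Since $W^u_F(x)$ is a complete, connected, simply connected manifold of the same dimension as $E^u_A$, a proper local diffeomorphism onto $E^u_A \cong \mathbb{R}^{\dim E^u}$ is a covering map, and simple connectedness forces it to be a diffeomorphism, yielding the desired Lipschitz graph description.

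Finally, writing $y - x = (u, \phi(u))$ with $u = \pi^u(y - x) \in E^u_A$ and parametrizing the segment of $W^u_F(x)$ from $x$ to $y$ by $t \mapsto x + (tu, \phi(tu))$, the bound
\[
d_{W^u_F}(x, y) \;\leq\; \sqrt{1 + \tan^2\alpha}\;\Vert u \Vert \;\leq\; \sqrt{1 + \tan^2\alpha}\;\Vert y - x \Vert
\]
gives quasi-isometry of $W^u_F$ with constant $Q = \sec\alpha$, and the argument for $W^s_F$ is identical with the roles of $E^u_A$ and $E^s_A$ exchanged. The main technical obstacle is the second step, namely promoting the infinitesimal cone condition to the global graph statement; this requires controlling the behavior of leaves at infinity, which is precisely where the uniformity coming from $\mathbb{Z}^n$-periodicity, together with the simple connectedness of $W^u_F(x)$, is essential.
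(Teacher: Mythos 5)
The paper does not prove this proposition; it cites it as Lemma~4.4 of \cite{micena-tahzibi2016unstable} and, in the surrounding paragraph, sketches two alternative routes (for special $f$: via the bounded-distance semiconjugacy $H$ with $A$ and the global product structure; for special $f$ on surfaces: via classifying $W_F^{u/s}$ as the suspension of a circle homeomorphism). Your cone-and-graph argument is a third, self-contained route in the same spirit as the original, and the outer skeleton is right: uniform $C^1$-closeness on $\mathbb{R}^n$ from $\mathbb{Z}^n$-periodicity, invariant cones $C^u_\alpha$ about $E^u_A$, leaf-as-Lipschitz-graph over $E^u_A$, then a direct arclength estimate giving the multiplicative form of quasi-isometry with constant $\sec\alpha$.

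The gap is in the properness step. You assert that a unit-speed leaf curve $\gamma$ satisfies $\Vert \pi^u(\gamma(s)) - \pi^u(\gamma(0)) \Vert \geq s\cos\alpha$ because $\Vert D\pi^u(v)\Vert \geq \cos\alpha\Vert v\Vert$ on the leaf. When $\dim E^u_A > 1$ this does not follow: the vector $u(s) := \pi^u(\gamma'(s))$ has norm at least $\cos\alpha$ at every $s$, but $\int_0^s u(t)\,dt$ can be small if $u$ turns, so the pointwise cone condition alone does not give a lower bound on the displacement. (For $\dim E^u_A = 1$ the cone has two components, $u(s)$ cannot change sign, and the inequality is fine — but the proposition is on $\mathbb{T}^n$.) Worse, that inequality, projected, is essentially the quasi-isometry you are trying to prove, so as written the step is circular. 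The fix is standard and short: you do not need properness in that form, you need $\pi^u|_{W^u_F(x)}$ to be a covering map, and this follows from the Hadamard-type lemma that a local diffeomorphism $\psi: M \to N$ from a complete Riemannian manifold with $\Vert D\psi(v)\Vert \geq c\Vert v\Vert$ everywhere has the curve-lifting property (lifts have length at most $c^{-1}$ times the base curve, hence extend by completeness) and is therefore a covering onto the connected manifold $N$. Here $M = W^u_F(x)$ is complete and simply connected, $N = E^u_A$, $c = \cos\alpha$, so $\pi^u$ restricts to a diffeomorphism and the graph representation holds. With that replacement the rest of your computation is correct as stated.
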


The above proposition is proved in \cite{micena-tahzibi2016unstable} as Lemma 4.4. In the case that $f$ is a special Anosov endomorphism, however, the quasi-isometry is guaranteed by the conjugacy between $f$ and $A$: the stable and unstable foliations lift to foliations of $\mathbb{R}^n$, carrying some uniformity that, together with the fact that $H$ is uniformly bounded and with the global product structure, allows us to bound the lengths properly, as in the invertible setting. Another proof of quasi-isometry of the stable and unstable foliations in the two-dimensional case for $f$ special is given in \cite{he2019dynamical}, using that $W_F^{u/s}$ is homeomorphic to a foliation by lines, therefore it is Reebless and, by the classification of foliations on compact surfaces given in \cite[\S 4.3]{hector1986introduction}, $W_F^{u/s}$ is the suspension of a circle homeomorphism, thus being quasi-isometric.

These arguments do not apply for $W^u_F$ if $f$ is not special, since this foliation does not project to a foliation of $\mathbb{T}^n$, but the quasi-isometry of $W^u_F$ follows from \cite[Proposition 2.10]{hall2019partially} for partially hyperbolic endomorphisms on $\mathbb{T}^2$.

\subsection{Disintegration of probability measures}
\label{sec:disin}

Given $(X, \mathcal{A}, \mu)$ a probability space and $\mathcal{P}$ a partition of $X$ in measurable sets, consider the projection $\pi: X \to \mathcal{P}$ that assigns for each point $x \in X$ the element of $\mathcal{P}$ which contains $x$. Using this projection, one can define a $\sigma$-algebra and a measure in $\mathcal{P}$: $\mathcal{Q} \subseteq \mathcal{P}$ is measurable if $\pi^{-1}(\mathcal{Q})$ is measurable in $X$, and $\hat{\mu}(\mathcal{Q}) = \pi_*\mu(\mathcal{Q}) = \mu(\pi^{-1}(\mathcal{Q}))$.

\begin{definition}
    A family $\{\mu_P\}_{P \in \mathcal{P}}$ of probability measures on $X$ is a \emph{system of conditional measures} (or a \emph{disintegration}) with respect to a partition $\mathcal{P}$ if, for $A \in \mathcal{A}$
    \begin{enumerate}
        \item $P \mapsto \mu_P(A)$ is measurable;
        \item $\mu_P(P) = 1$ for $\hat{\mu}$-almost every $P \in \mathcal{P}$;
        \item $\mu(A) = \int \mu_P(A) d\hat{\mu}(P) $.
    \end{enumerate}
\end{definition}

The conditions on $\mu_P(A)$ for $A$ measurable can be replaced in the above definition by $\int \phi d\mu_P$ for $\phi: X \to \mathbb{R}$ continuous.

If the $\sigma$-algebra is countably generated, given a partition $\mathcal{P}$, if there is a disintegration, it is unique with respect to $\hat{\mu}$. More precisely:

\begin{proposition}
    If the $\sigma$-algebra has a countable generator and $\{\mu_P\}$ and $\{\nu_P\}$ are disintegrations with respect to $\mathcal{P}$, then $\mu_P = \nu_P$ for $\hat{\mu}$-almost every $P \in \mathcal{P}$.
\end{proposition}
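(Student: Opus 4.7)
The plan is to derive uniqueness by combining the three defining properties of disintegration with the fact that a probability measure on a countably generated $\sigma$-algebra is determined by its values on a countable $\pi$-system of generators.

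First, I would upgrade property (3) to a fiberwise version: for every $A \in \mathcal{A}$ and every measurable $\mathcal{Q} \subseteq \mathcal{P}$,
\begin{equation*}
\mu(A \cap \pi^{-1}(\mathcal{Q})) = \int_{\mathcal{Q}} \mu_P(A) \, d\hat{\mu}(P),
\end{equation*}
and the analogous identity with $\nu_P$. To see this, apply property (1) to the measurable set $A \cap \pi^{-1}(\mathcal{Q})$ to obtain measurability of $P \mapsto \mu_P(A \cap \pi^{-1}(\mathcal{Q}))$, and then use property (2): since $\mathcal{P}$ is a partition and $\mu_P$ is concentrated on $P$ for $\hat{\mu}$-a.e.\ $P$, one has $\mu_P(A \cap \pi^{-1}(\mathcal{Q})) = \mu_P(A)$ for $P \in \mathcal{Q}$ and $\mu_P(A \cap \pi^{-1}(\mathcal{Q})) = 0$ for $P \notin \mathcal{Q}$, outside a $\hat{\mu}$-null set. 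Integrating and invoking property (3) applied to $A \cap \pi^{-1}(\mathcal{Q})$ gives the displayed equality.

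Next, comparing the fiberwise identities for $\mu_P$ and $\nu_P$, for every fixed $A$ and every measurable $\mathcal{Q}$,
\begin{equation*}
\int_{\mathcal{Q}} \mu_P(A) \, d\hat{\mu}(P) = \int_{\mathcal{Q}} \nu_P(A) \, d\hat{\mu}(P),
\end{equation*}
which forces $\mu_P(A) = \nu_P(A)$ for $\hat{\mu}$-a.e.\ $P$, with an exceptional null set $N_A$ depending on $A$. To promote this to a single null set, let $\mathcal{C}$ be a countable generator of $\mathcal{A}$; replacing $\mathcal{C}$ by the algebra it generates (still countable), one may assume $\mathcal{C}$ is closed under finite intersections. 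Set $N = \bigcup_{A \in \mathcal{C}} N_A$, which is a countable union of $\hat{\mu}$-null sets and hence null. For every $P \notin N$ the probability measures $\mu_P$ and $\nu_P$ agree on the $\pi$-system $\mathcal{C}$, so by Dynkin's $\pi$-$\lambda$ theorem they agree on $\sigma(\mathcal{C}) = \mathcal{A}$, giving $\mu_P = \nu_P$.

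The main technical obstacle is the first step: justifying the fiberwise identity rigorously, because property (1) only asserts measurability of $P \mapsto \mu_P(A)$ for $A \in \mathcal{A}$, not of $P \mapsto \mu_P(A)$ as a function restricted to $\mathcal{Q}$, and one must combine it with property (2) to handle the restriction. The remainder is standard measure-theoretic bookkeeping, and the countable-generator hypothesis is used precisely to keep the union of exceptional sets null.
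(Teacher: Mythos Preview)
Your argument is correct and is the standard proof of uniqueness of disintegrations. Note, however, that the paper does not actually prove this proposition: it is stated without proof in the preliminaries section as a well-known fact, so there is nothing to compare against. Your fiberwise identity, the a.e.\ agreement on each fixed $A$, and the $\pi$-$\lambda$ promotion via a countable generator are exactly the usual steps; the self-flagged ``obstacle'' in the first step is not a real issue, since property (3) applied to $A \cap \pi^{-1}(\mathcal{Q})$ together with $\mu_P(P)=1$ immediately yields the restricted integral.
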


The existence of conditional measures is guaranteed by the Rokhlin disintegration theorem for partitions that can be generated by a countable family of sets.

\begin{definition}
    A partition $\mathcal{P}$ is a \emph{measurable partition} (or \emph{countably generated}) with respect to $\mu$ if there is $X_0 \in X$ with $\mu(X_0) = 1$ and a family $\{A_i\}_{i \in \mathbb{N}}$ of measurable sets such that, given $P \in \mathcal{P}$, there is $\{P_i\}_{i \in \mathbb{N}}$ with $P_i \in \{A_i, A_i^\complement\}$ such that $P = \bigcap\limits_{i \in \mathbb{N}} P_i$ restricted to $X_0$.
\end{definition}

\begin{theorem}[Rokhlin disintegration]
    If $X$ is a complete and separable metric space and $\mathcal{P}$ is a measurable partition, then the probability $\mu$ has a disintegration on a family of conditional measures $\mu_P$.
\end{theorem}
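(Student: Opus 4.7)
The plan is to exploit the countable generation of the partition together with the Polish structure to construct $\mu_P$ as a martingale-type limit of elementary conditional measures. First I would fix a countable generating family $\{A_i\}_{i \in \mathbb{N}}$ together with the full-measure set $M_0$ provided by the definition of measurable partition. For each $n$, let $\mathcal{P}_n$ be the finite partition generated by $A_1, \ldots, A_n$; then $\mathcal{P}_1 \prec \mathcal{P}_2 \prec \cdots$ and, restricted to $M_0$, the limit $\bigvee_n \mathcal{P}_n$ coincides with $\mathcal{P}$. For $x \in M_0$ denote by $P_n(x)$ the atom of $\mathcal{P}_n$ containing $x$, and define the elementary conditional measures
\[
\mu_n^x(B) \;=\; \frac{\mu(B \cap P_n(x))}{\mu(P_n(x))}
\]
whenever $\mu(P_n(x)) > 0$; since $\mu$-a.e.\ $x$ lies in atoms of positive $\mu$-measure for every $n$, this is well-defined almost everywhere.

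Next I would fix a countable $\mathbb{Q}$-algebra $\mathcal{C} = \{\phi_k\}$ dense in $C_b(X)$ (available because $X$ is Polish). For each $\phi_k$ the sequence $n \mapsto \int \phi_k \, d\mu_n^x$ is a martingale with respect to the filtration $(\sigma(\mathcal{P}_n))$, so by Doob's martingale convergence theorem it converges $\mu$-a.e.\ to some limit $L_k(x)$, with $L_k \in L^1(\mu)$ and $\int L_k \, d\mu = \int \phi_k \, d\mu$. Taking a single full-measure set $M_1 \subseteq M_0$ on which all $L_k(x)$ exist simultaneously, I would show that $\phi_k \mapsto L_k(x)$ extends by density and positivity to a positive normalized linear functional on $C_b(X)$, hence by the Riesz representation theorem defines a Borel probability measure $\mu^x$ on $X$ with $\int \phi_k \, d\mu^x = L_k(x)$.

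Then I would verify the three axioms of disintegration. Measurability of $P \mapsto \mu_P(A)$ follows because $x \mapsto \int \phi \, d\mu^x$ is a pointwise limit of $\sigma(\mathcal{P}_n)$-measurable functions, hence $\mathcal{P}$-measurable, and one passes from continuous $\phi$ to Borel $A$ by a monotone-class argument. The identity $\mu(A) = \int \mu^x(A)\,d\mu(x)$ comes from integrating the martingale identity $\int \phi_k \, d\mu = \int (\int \phi_k d\mu_n^x)\, d\mu(x)$ and passing to the limit by dominated convergence, then projecting to the quotient. The delicate step is the atom concentration $\mu^x(P(x)) = 1$ for $\mu$-a.e.\ $x$: here I would use that each atom is $P(x) = \bigcap_i A_i^{\epsilon_i(x)}$ (with $\epsilon_i \in \{0,1\}$ indicating membership) and check that $\mu^x(A_i^{\epsilon_i(x)}) = 1$ for all $i$, using that $\mathbf{1}_{A_i^{\epsilon_i(x)}}$ can be approximated in $L^1(\mu^x)$ using the Polish regularity, together with the fact that for $n \geq i$ the elementary conditional $\mu_n^x$ already satisfies $\mu_n^x(A_i^{\epsilon_i(x)}) = 1$; taking $n \to \infty$ and intersecting over $i$ gives $\mu^x(P(x)) = 1$.

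The main obstacle I anticipate is precisely this last concentration step, because atoms of $\mathcal{P}$ are typically null for $\mu$ and the weak limit of probability measures need not preserve measures of non-closed or non-open sets. This is where the Polish hypothesis genuinely enters: it allows one to approximate each generator $A_i$ from inside and outside by closed and open sets on which the portmanteau inequalities for weak convergence apply, forcing $\mu^x$ to be carried by the intersection $P(x)$. Finally, uniqueness of the disintegration (already stated before the theorem) guarantees that the construction does not depend on the choices of $\{\phi_k\}$ and $\{A_i\}$, so setting $\mu_P := \mu^x$ for any $x \in P$ yields the desired family.
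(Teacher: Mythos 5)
The paper states Rokhlin's disintegration theorem as a cited standard result and gives no proof of its own, so there is nothing internal to compare against; I will assess your sketch on its own terms. The overall strategy — finite refinements $\mathcal{P}_n$, Doob martingale convergence on a countable dense family of test functions, Riesz representation, then verification of the three disintegration axioms — is the right one and matches the classical proof in spirit.

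Two of your steps contain genuine gaps as written. First, the Riesz step is not immediate for a non-compact Polish $X$: a positive normalized linear functional on $C_b(X)$ corresponds in general only to a finitely additive set function, so the Riesz representation theorem by itself does not hand you a $\sigma$-additive Borel probability $\mu^x$. One must either establish tightness of the family $\{\mu_n^x\}_n$, or (the standard fix) embed $X$ as a $G_\delta$ in a compact metrizable $K$, apply Riesz on the separable algebra $C(K)$, and later verify that the resulting measure is carried by $X$. Second, the concentration argument via the portmanteau theorem does not go through as stated: the generators $A_i$ are merely Borel, and approximating $A_i^c$ from outside by open sets $U$ gives only $\mu^x(U)\le\liminf_n\mu_n^x(U)$, which is useless because $\mu_n^x(U)$ can equal $1$ even when $\mu_n^x(A_i^c)=0$, while approximating from inside by closed $F\subseteq A_i^c$ gives nothing at all. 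The standard way around this is to observe that for each \emph{fixed} Borel $B$ the quantities $\mu_n^x(B)=E[\mathbf{1}_B\mid\sigma(\mathcal{P}_n)](x)$ form a bounded martingale, which by the increasing-filtration martingale convergence theorem (and $\bigvee_n\sigma(\mathcal{P}_n)=\sigma(\mathcal{P})$ mod $\mu$) converges $\mu$-a.e.\ to $E[\mathbf{1}_B\mid\sigma(\mathcal{P})](x)$; one then identifies this a.e.\ limit with the Riesz-constructed $\mu^x(B)$ by a monotone-class argument starting from continuous functions. Taking $B=A_i\in\sigma(\mathcal{P})$, the conditional expectation is $\mathbf{1}_{A_i}(x)$ a.e., so $\mu^x(A_i)=\mathbf{1}_{A_i}(x)$ a.e.; intersecting over $i$ gives $\mu^x(P(x))=1$. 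You correctly identified the concentration step as the delicate point, but the portmanteau route does not deliver it — the martingale identification does.
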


For a separable metric space $X$, the Borel $\sigma$-algebra is countably generated, and the disintegration given by Rokhlin disintegration theorem is unique for $\hat{\mu}$-almost every $P$. In particular, if the partition $\mathcal{P}$ is invariant for a measurable function $T: X \to X$ that preserves $\mu$, then $T$ carries conditional measures to conditional measures, that is, $T_*\mu_P = \mu_{T(P)}$ for $\hat{\mu}$-almost every $P$, since $\{T_*\mu_P\}$ is a disintegration of $\mu$ with respect to $\mathcal{P}$.

Returning to our closed Riemannian $n$-manifold $M$, a \emph{foliated box} for a foliation $\mathcal{F}$ of dimension $k$ on $M$ is given by a local leaf $X$ and a local $(n-k)$-dimensional transversal $Y$ to this leaf. The foliated box $\mathcal{B}$ is homeomorphic to $X \times Y$ and the map $\phi: X \times Y \to \mathcal{B}$ takes sets on the form $X \times \{y\}$ to local leaves $\mathcal{F}_{loc}(y)$. We identify $X \times Y$ with $\mathcal{B}$ and $X \times \{y\}$ with $\mathcal{F}_{loc}(y)$.

Note that a foliated box has its local leaves as a measurable partition. Indeed, with the Riemannian structure inherited from $M$, $Y$ is a separable metric space, having a countable base of open sets $\{Y_i\}_{i \in \mathbb{N}}$. The sets $A_i := \bigcup\limits_{y \in Y_i} \mathcal{F}_{loc}(y)$ for $i \in \mathbb{N}$ form a countable generator for the partition, independent of the measure. Then, for foliated boxes, we can always consider a disintegration of any probability measure, by the Rokhlin disintegration theorem.

This allows us to define absolute continuity.

\begin{definition}
    A foliation $\mathcal{F}$ is \emph{(leafwise) absolutely continuous} if, given a foliated box, the conditional volume $m_{\mathcal{F}(x)}$ on each leaf is equivalent to the induced Lebesgue measure $\lambda_{\mathcal{F}(x)}$.
\end{definition}

This means that a set $U \subseteq M$ has zero volume if and only if it has null $m_{\mathcal{F}(x)}$-measure at almost every leaf $\mathcal{F}(x)$. A stronger notion is the following.

\begin{definition}[\cite{micena2013regularity}]
    \label{def:UBD}
    A one-dimensional foliation $\mathcal{F}$ of $M$ has the \emph{uniform bounded density property} (or \emph{UBD property}) if there is $C > 1$ such that, for every foliated box $\mathcal{B}$, the disintegration $\{m^\mathcal{B}_x\}$ of volume normalized to $\mathcal{B}$ satisfies
    \begin{equation}
        \label{eq:ubd-def}
        C^{-1} \leq \dfrac{d m^\mathcal{B}_x}{d \hat{\lambda^\mathcal{B}_x}} \leq C,
    \end{equation}
    where $\hat{\lambda^\mathcal{B}_x}$ is the normalized induced volume in the connected component of $\mathcal{F} \cap \mathcal{B}$ which contains $x$.
\end{definition}

If two measures $\mu$ and $\nu$ defined over the same measurable space are equivalent and their Radon--Nikodym derivative is bounded from above and below as in equation (\ref{eq:ubd-def}), we say that they are \textit{uniformly equivalent}, we use the notation $\mu \stackrel{u}{\sim} \nu$. It is easy to check that it defines an equivalence relation. If two families of measures $\{\mu\}_{i \in \mathcal{I}}$ and $\{\nu\}_{i \in \mathcal{I}}$ are such that $\mu_i \stackrel{u}{\sim} \nu_i$ for each $i \in \mathcal{I}$, and the boundedness constant does not depend on $i$, we say that \textit{$\mu_i \stackrel{u}{\sim} \nu_i$ with uniform constant}.

For an Anosov endomorphism, the unstable foliation may not exist globally, so we say that \emph{the unstable foliation of $f$ has the UBD property} if the unstable foliation of a lift $F: \mathbb{R}^2 \to \mathbb{R}^2$ of $f$ has the UBD property.

\section{Proof of Theorem \ref{teo:a}}
\label{sec:proofa}

This proof is an adaptation of the proof in \cite{gogolev22c} for the non-invertible two-dimensional case. We want to apply Journé's Regularity Theorem \ref{teo:journe}. Even if $f$ and $g$ are not invertible, we can use local transverse foliations on the universal cover to apply Theorem \ref{teo:journe}. So we prove that the lift $H$ of $h$ restricted to the leaves of some local unstable foliation is $C^{k}$. For the stable foliations, the proof is analogous. Therefore, $H$ is $C^{k}$ on a small foliated box on the covering space.

\begin{theorem}[\cite{journe1988regularity}]
    \label{teo:journe}
    Let $M_j$ be a manifold, $W^s_j$, $W^u_j$ continuous transverse foliations with uniformly smooth leaves ($j = 1,2$) and $h: M_1 \to M_2$ a homeomorphism such that $h(W^\sigma_1) = W^\sigma_2$ ($\sigma = s, u$). If $h$ restricted to the leaves of the foliations $W^s_1$ and $W^u_1$ is uniformly $C^{r+\alpha}$, with $r \in \mathbb{N}$ and $\alpha \in (0,1)$, then $h$ is $C^{r+\alpha}$.
\end{theorem}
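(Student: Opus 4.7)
The plan is to reduce the statement to a purely local analytic fact: if $\phi$ is a continuous real-valued function on an open set $U$ of Euclidean space which is uniformly $C^{r+\alpha}$ along the leaves of two continuous transverse foliations $\mathcal{F}_u, \mathcal{F}_s$ with uniformly $C^{r+\alpha}$ leaves, then $\phi$ is $C^{r+\alpha}$ on $U$. Since $h$ is a homeomorphism carrying foliations to foliations, working in local foliation charts around $p \in M_1$ and $h(p) \in M_2$ reduces Theorem~\ref{teo:journe} to this statement applied coordinate-by-coordinate to $h$.

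The core of the argument is to attach to each $q \in U$ a candidate $r$-jet $J_q\phi$: build a leafwise Taylor polynomial of degree $r$ from $\phi|_{\mathcal{F}_u(q)}$ and another from $\phi|_{\mathcal{F}_s(q)}$, and splice them using the local product structure coming from transversality. For nearby $q_1, q_2 \in U$, transverse holonomy produces an intermediate point $q_3 \in \mathcal{F}_u(q_1) \cap \mathcal{F}_s(q_2)$. One then estimates $\phi(q_2) - P_{q_1}(q_2)$, where $P_{q_1}$ is the candidate $r$-th order polynomial at $q_1$, by chaining two leafwise Taylor expansions: from $q_1$ along $\mathcal{F}_u$ to $q_3$, and from $q_3$ along $\mathcal{F}_s$ to $q_2$. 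The uniform leafwise $C^{r+\alpha}$ bound, together with continuous transversality controlling the leafwise lengths of the detour $q_1 \to q_3 \to q_2$, yields a Whitney-type estimate $|\phi(q_2) - P_{q_1}(q_2)| = O(\|q_1 - q_2\|^{r+\alpha})$, and similarly for the consistency of the coefficients of $P_{q_1}$ and $P_{q_2}$. This shows that $\{J_q\phi\}_{q \in U}$ is a Whitney field of class $C^{r+\alpha}$.

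The base case $r = 0$ is joint Hölder continuity, proved directly by the two-step estimate $|\phi(q_1) - \phi(q_2)| \le |\phi(q_1) - \phi(q_3)| + |\phi(q_3) - \phi(q_2)|$, with each summand bounded by the uniform leafwise $C^\alpha$ norm times a leafwise distance comparable to $\|q_i - q_3\|$ on scales where the holonomies are $C^0$-close to the identity. Once the Whitney jet is in place, Whitney's extension theorem furnishes a $C^{r+\alpha}$ function $\tilde{\phi}$ on $U$ realizing $\{J_q\phi\}$; by construction the order-zero part of $J_q\phi$ is $\phi(q)$, so $\tilde{\phi}(q) = \phi(q)$ for every $q$, and therefore $\phi = \tilde{\phi} \in C^{r+\alpha}$.

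The main obstacle is precisely the inductive compatibility estimate: since the foliations are only continuous, holonomies between transversals need not be differentiable, so one cannot straighten the foliations into coordinate hyperplanes without destroying the transverse regularity being used. The positive Hölder exponent $\alpha > 0$ is essential — it provides the quantitative slack needed for the cross-estimates to close up when combining Taylor expansions along two transverse directions whose relative positions are controlled only continuously.
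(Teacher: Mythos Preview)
The paper does not prove this statement: Theorem~\ref{teo:journe} is quoted from Journ\'e's original paper \cite{journe1988regularity} and used as a black box in the proof of Theorem~A. So there is no ``paper's own proof'' to compare against; what follows is an assessment of your sketch on its merits.

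Your outline captures the right spirit --- reduce to a local statement about a scalar function, build jets, and invoke Whitney extension --- and you correctly flag that $\alpha>0$ is indispensable and that non-differentiable holonomies are the real enemy. But there is a genuine gap at the step ``splice them using the local product structure.'' The leafwise Taylor polynomial of $\phi|_{\mathcal F_u(q)}$ gives you only the pure $u$-derivatives at $q$, and likewise $\phi|_{\mathcal F_s(q)}$ gives only the pure $s$-derivatives. A degree-$r$ Whitney jet at $q$ must also contain all the \emph{mixed} partials $\partial_u^a\partial_s^b\phi$ with $a+b\le r$, and nothing in your construction produces them. For $r=1$ this is harmless (the two leafwise gradients span $T_qU$), but already for $r=2$ you need a candidate for $\partial_u\partial_s\phi$, and since the foliations are merely continuous you cannot simply differentiate the $u$-derivative along $\mathcal F_s$. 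The two-step detour $q_1\to q_3\to q_2$ that you describe controls $\phi(q_2)-P_{q_1}(q_2)$ only once $P_{q_1}$ is the \emph{full} polynomial; with only the pure-direction terms in hand the estimate does not close.

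Journ\'e's actual argument sidesteps this by avoiding an explicit jet construction altogether: he works with a Campanato-type characterization of $C^{r+\alpha}$, showing directly via an iterated-rescaling scheme that on dyadic squares $\phi$ is well approximated by degree-$r$ polynomials, alternating between horizontal and vertical Taylor expansions and letting the $\alpha>0$ absorb the error from the merely continuous transverse structure. An alternative route closer to what you sketch (due to de la Llave and others) does use Whitney jets, but it proceeds by induction on $r$: one first proves $\phi\in C^{1+\alpha}$, which makes the first leafwise derivatives genuine continuous functions, then shows \emph{those} are uniformly $C^{r-1+\alpha}$ along both foliations, and iterates. Your write-up would need to make this inductive mechanism explicit to be complete.
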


Let $F, G: \mathbb{R}^2 \to \mathbb{R}^2$ be lifts for $f$ and $g$, and $p: \mathbb{R}^2 \to \mathbb{T}^2$ the canonical projection. $F$ and $G$ are Anosov diffeomorphisms and have stable and unstable foliations $W^s_{F/G}$ and $W^u_{F/G}$, which are quasi-isometric. For any $\xi \in \mathbb{T}^2$, we can consider a foliated box $\mathcal{B}$ with respect to $f$ containing $\xi$ by fixing $\overline{\xi} \in p^{-1}(\xi)$ and projecting a small foliated box $\mathcal{B}_{\overline{\xi}}$ with respect to $W^u_F$ containing $\overline{\xi}$, such that $\restr{p}{\mathcal{B}_{\overline{\xi}}}$ is a bijection over its image $\mathcal{B}$. In particular, $\restr{p}{\mathcal{B}_{\overline{\xi}}}$ is an isometry, which allows us to work either on $\mathcal{B}$ or $\mathcal{B}_{\overline{\xi}}$, and the regularity of $h$ on $\mathcal{B}$ is the same as the regularity of $H$ on $\mathcal{B}_{\overline{\xi}}$. On $\mathcal{B}$, the projected unstable foliation is transverse to $W^s_f$. Consider $H(\mathcal{B})$ the foliated box with respect to $g$ obtained by applying $H$. 

The stable leaves on $\mathcal{B}$ do not depend on the choice of $\overline{\xi}$, and we denote them by $W^s_\mathcal{B}(x)$. The unstable leaves, however, do depend on the choice of $\overline{\xi}$. More precisely, given $x \in \mathcal{B}$, the unstable leaf $W^u_\mathcal{B}(x)$ is a local unstable leaf with respect to the orbit $\tilde{x} = \{p(F^k(\overline{x}))\}_{k \in \mathbb{Z}}$, where $\overline{x} = \restr{p}{\mathcal{B}_{\overline{\xi}}}^{-1}(x)$, and we keep this notation along this proof.

The following classical theorem, which we apply on the next lemma, holds for Anosov endomorphisms, as observed by F. Micena in \cite{micena2020rigidity}.

    \begin{theorem}[Livshitz Theorem]
        \label{teo:Livshitz}
        Let $M$ be a Riemannian manifold and $f: M \to M$ be a transitive Anosov endomorphism. If $\varphi_1, \varphi_2: M \to \mathbb{R}$ are Hölder continuous and
        $$\prod_{i=1}^n \varphi_1(f^i(x)) = \prod_{i=1}^n \varphi_2(f^i(x)) \mbox{ for all $x$ such that } f^n(x) = x  \mbox{, with }n \in \mathbb{N},$$
        then there is a function $P: M \to \mathbb{R}$ such that
        $\dfrac{\varphi_1}{\varphi_2} = \dfrac{P \circ f}{P}.$
        $P$ is Hölder continuous and it is unique up to a multiplicative constant.
    \end{theorem}
    
\begin{lemma}    
    \label{lem:teo2-1}
    $h$ is uniformly Lipschitz along $W^u_f(\tilde{x})$.
\end{lemma}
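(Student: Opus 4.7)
The plan is to adapt the strategy of \cite{gogolev22c} to the non-invertible setting on $\mathbb{T}^2$. On each local unstable leaf inside the foliated box $\mathcal{B}_{\overline{\xi}}$ I will construct a positive density $\rho_F$ that encodes the past distortion of the unstable Jacobian of $F$, and the analogous $\rho_G$ for $G$, and then use the hypothesis on periodic Lyapunov exponents to compare them through $H$. Since $\restr{p}{\mathcal{B}_{\overline{\xi}}}$ is an isometry, the Lipschitz bound for $h$ on $W^u_f(\tilde x)\cap \mathcal{B}$ is equivalent to the one for $H$ on $W^u_F\cap \mathcal{B}_{\overline{\xi}}$.

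For $x \in \mathcal{B}_{\overline{\xi}}$ and $y$ in the local unstable leaf of $F$ through $x$ inside $\mathcal{B}_{\overline{\xi}}$, the backward iterates $F^{-n}x$ and $F^{-n}y$ remain on a common local unstable leaf and approach each other exponentially. Since $F$ is $C^k$ with $C^1$ unstable distribution (Proposition \ref{prop:F-C1-hol}), the infinite product
$$\rho_F(x,y) = \prod_{k=1}^{\infty} \frac{\|DF|_{E^u_F}(F^{-k} x)\|}{\|DF|_{E^u_F}(F^{-k} y)\|}$$
converges to a positive Hölder function bounded above and below on $\mathcal{B}_{\overline{\xi}}$. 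The measure $d\mu^u_F := \rho_F(x,\cdot)\, d\lambda$ on the local leaf is uniformly equivalent to arclength and transforms under $F$ by a cocycle-type functional equation, so it ``behaves regularly'' in the sense announced in the introduction. The same construction yields $\rho_G$ and $\mu^u_G$ on $H(\mathcal{B}_{\overline{\xi}})$.

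The key step is to show that $H$ sends the family $\mu^u_F$ to $\mu^u_G$ up to a uniformly bounded factor. At each periodic point $p$ of $f$ with its canonical periodic past, the hypothesis gives that the Birkhoff sum of $\log\|Df|_{E^u}\|$ over the orbit of $p$ equals the corresponding sum of $\log\|Dg|_{E^u}\|$ over the orbit of $h(p)$. Since $\tilde f$ is a transitive expansive homeomorphism with dense periodic points (using Proposition \ref{prop:transitive}), a Livsic-type argument on the inverse limit produces a Hölder function $\psi$ satisfying
$$\log\|Df|_{E^u}\| = \log\|Dg|_{E^u}\|\circ h + \psi\circ \tilde f - \psi.$$
Lifting via $H$ to $\mathcal{B}_{\overline{\xi}}$ and substituting into the telescoping product defining $\rho_F$ yields an identity of the form $\rho_F(x,y) = \rho_G(Hx,Hy)\cdot e^{\psi(y)-\psi(x)}$, with $\psi$ uniformly bounded on the compact box.

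Given this identification, $H_*\mu^u_F(x,\cdot)$ is uniformly equivalent to $\mu^u_G(Hx,\cdot)$, with a ratio independent of $x$ and of the choice of $\overline{\xi}$ (by compactness of $\mathbb{T}^2$ and equivariance under deck transformations). Integrating between $x$ and $y$ and using the uniform bounds on $\rho_F,\rho_G,e^\psi$ gives $d_{W^u_G}(Hx,Hy)\leq C\, d_{W^u_F}(x,y)$ for a constant $C$ independent of the data, and projecting by $p$ proves the lemma. The main obstacle I expect is the Livsic step: because $f$ is non-invertible and a periodic point of $f$ may admit several unstable directions, the cocycle and its coboundary must be set up on the inverse limit where the unstable direction is determined by the past branch, and one must verify that the resulting Hölder coboundary descends in a way compatible with the telescoping needed in the definition of $\rho_F$.
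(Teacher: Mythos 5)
Your construction of $\rho_F$ and the conformal metric, and your use of a Livsic coboundary to compare $D^u_F$ with $D^u_G\circ H$, both line up with the paper's proof. But the final step has a genuine gap. The identity $\rho_F(x,y)=\rho_G(Hx,Hy)\,e^{\psi(x)-\psi(y)}$ is a pointwise relation between the \emph{densities} evaluated at corresponding points; it says nothing about how $H$ transports the reference arclength measure from $W^u_F$ to $W^u_G$. Writing $H_*\mu^u_F(x,\cdot)$ in coordinates on $W^u_G(Hx)$, its density against arclength is $\rho_F(x,H^{-1}w)/\mathrm{Jac}^u_H(H^{-1}w)$, so the claim that $H_*\mu^u_F(x,\cdot)$ is uniformly equivalent to $\mu^u_G(Hx,\cdot)$ is equivalent to $\mathrm{Jac}^u_H$ being uniformly bounded above and below — i.e., precisely the Lipschitz statement you are trying to prove. "Integrating and using the uniform bounds on $\rho_F,\rho_G,e^\psi$" therefore does not close the argument: the two integrals are over different curves with unrelated arclength, and the substitution $w=Hz$ reintroduces the unknown Jacobian.

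What your proposal is missing is the anchoring step the paper uses. The paper first proves a Lipschitz bound for $H$ along unstable leaves at distance $\geq 1$, directly from the $\mathbb{Z}^2$-equivariance $H(x+m)=H(x)+m$ together with quasi-isometry of $W^u_F$ and $W^u_G$; this is entirely independent of Livsic. Then, for two close points $x,y$, it iterates forward to the least $N$ with $d^u_f(f^Nx,f^Ny)\geq 1$, uses the cocycle relation $\tilde d_f(f^Nx,f^Ny)=\prod_{i<N}D^u_F(F^i\overline x)\,\tilde d_f(x,y)$ (and similarly for $g$), and telescopes: the ratio $\tilde d_g(hx,hy)/\tilde d_f(x,y)$ equals $P(f^Nx)/P(x)$ times the bounded far-point ratio at time $N$. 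The Livsic function $P$ plays the role of your $\psi$ (the paper applies Livsic on the torus, extending $D^u_F$ and $D^u_G\circ H$ from a dense orbit via transitivity rather than passing to the inverse limit, but both should work); however, without the far-point base case there is nothing to pin down the constant, since your coboundary identity only determines the conformal class, not the normalization. You should add that step.
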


\begin{proof}

    Fix any $q \in \mathcal{B}$ and consider $h_q: W^u_f(\tilde{q}) \to W^u_g(h(\tilde{q}))$. We aim to show that $h_q$ is Lipschitz with a constant independent of $\mathcal{B}$ and $q$, that is, that there is $K > 0$ such that $d^u_g(h_q(x), h_q(y)) \leq K d^u_f(x,y)$, with $d^u$ the distance along the leaves. We actually do that for equivalent metrics $\tilde{d}_{f/g}$ along the unstable leaves of $f/g$, defined in using $\rho_{f/g}$ as follows.
    
    Given $x \in \mathcal{B}$ and $y \in W^u_\mathcal{B}(x)$, consider
    \begin{equation}
        \label{eq:density}
        \rho_f(x,y) := \prod_{n=1}^\infty \dfrac{D^u_F(F^{-n}(\overline{x}))}{D^u_F(F^{-n}(\overline{y}))},
    \end{equation}
    where $D_F^u(z) = \left| \restr{DF}{E^u_F}(z) \right|$ and $\overline{x}$ and $\overline{y}$ are the lifts of $x$ and $y$ in $\mathcal{B}_{\overline{\xi}}$. 

    Observe that, since $f$ is not necessarily special, each point may have more than one unstable direction for $f$, and each lift of a point on $\mathbb{R}^2$ may have a different unstable direction for $F$. Even so, $D_F^u(\cdot)$ is well defined for points in $\mathcal{B}$, since we fix a lift of this point in $\mathbb{R}^2$ as the one belonging to $\mathcal{B}_{\overline{\xi}}$, and $\rho_f(x, \cdot): W^u_\mathcal{B}(\tilde{x}) \to \mathbb{R}$ is well defined and Hölder continuous, since $D^u_F$ is uniformly bounded and $F^{-n}$ contracts uniformly. Furthermore, given $x \in \mathcal{B}$, $D_F^u(\cdot)$ is well defined for points on $W^u_f(\tilde{x})$, which is the projection of the global unstable leaf of $\overline{x}$ with respect to $F$. Indeed, $p$ is a bijection between $W^u_F(x)$ and $W^u_f(\tilde{x})$, which makes the choice of point on $\mathbb{R}^2$ to compute $D_F^u(\cdot)$ unambiguous. Additionally, $D_F^u(\cdot)$ is well defined for points belonging to iterates of these global unstable leaves, that is, on $p(F^i(W^u_F(\overline{x})))$ for every $i \in \mathbb{Z}$, which are precisely $W^u_f(\tilde{f}^i(\tilde{x}))$ for each $i \in \mathbb{Z}$.

    Thus, if $x \in \mathcal{B}$ and $y \in W^u_f(\tilde{x})$, it makes sense to calculate $\rho_f(f^i(x), f^i(y))$. Then
        \begin{equation}
            \label{eq:rho-A3}
            \rho_f(f(x), f(y)) = \dfrac{D^u_F(\overline{x})}{D^u_F(\overline{y})}\rho_f(x, y).
        \end{equation}
    Additionally, $\rho_f(\cdot, \cdot)$ is the unique continuous function satisfying both (\ref{eq:rho-A3}) and $\rho_f(x, x)=1$. Also, note that, for all $K > 0$, there exists $C > 0$ such that $d^u(x, y) < K$ implies $C^{-1} < \rho_f(x,y) < C$. We define $\rho_g(\cdot, \cdot)$ analogously using the foliated box $H(\mathcal{B})$.
    
    Let $\lambda_q$ be the induced volume on $W^u_f(\tilde{q})$. For $x, y \in W^u_f(\tilde{q})$,

    \begin{equation}
        \label{eq:dist}
        \tilde{d}_f(x,y) := \int_x^y \rho_f(x,z)d\lambda_q(z)
    \end{equation}
    is a metric, and it is equivalent to $d^u_f$, since $\rho_f(x, z)$ is uniformly bounded for $z$ from $x$ to $y$ along the leaf. Moreover, $\tilde{d}_f(f(x),f(y)) = D^u_F(\overline{x})\tilde{d}_f(x,y)$, and, inductively, for all $n \in \mathbb{N}$
    \begin{equation}
        \label{eq:tilde-d-B2}
        \tilde{d}_f(f^n(x),f^n(y)) = \prod\limits_{i=0}^{n-1} D^u_F(F^i(\overline{x})) \; \tilde{d}_f(x,y).
    \end{equation}
    
    We also have that $\tilde{d}_f$ is uniformly continuous: for all $\varepsilon > 0$ there exists $\delta > 0$ such that for all $x, y, z, q$ with $y \in W^u_f(\tilde{x})$, $q \in W^u_f(\tilde{z})$, $z \in B(x, \delta)$, and $q \in B(y, \delta)$ we have that $\vert \tilde{d}_f(x,y) - \tilde{d}_f(z,q) \vert < \varepsilon$.

    Therefore, to prove that $h_q$ is Lipschitz, it suffices to show that $\tilde{d}_g(h_q(x),h_q(y)) < K \tilde{d}_f(x,y)$ for an uniform $K$, where $\tilde{d}_g$ is defined analogously.

    Since $H$ is a lift for the conjugacy $h$, $H(x + m) = H(x) + m$ for all $x \in \mathbb{R}^2$ and $m \in \mathbb{Z}^2$, which implies that there is $C > 0$ such that $\Vert H(x) - H(y) \Vert \leq C \Vert x-y \Vert$ for $\Vert x-y \Vert \geq 1$, where $\Vert . \Vert$ is the Euclidean norm on $\mathbb{R}^2$.

    Using the quasi-isometry of the unstable foliations $W^u_F$ and $W^u_G$, and projecting on $\mathbb{T}^2$, the same inequality is valid for the induced metric on unstable leaves, i.e., there is $C >0$ such that
    \begin{equation}
        \label{eq:lips-distgrande}
        d^u_g(h(x), h(y)) \leq C d^u_f(x, y) \mbox{ for } d^u_f(x,y) \geq 1.
    \end{equation}

    Therefore, we already have the Lipschitz property for points far enough apart. For $x$ and $y$ close, we use the Livshitz Theorem. 

    We apply Theorem \ref{teo:Livshitz} with $\varphi_1(z) = D^u_F(\overline{z})$ and $\varphi_2(z) = D^u_G(H(\overline{z}))$. Note that $\varphi_1(z)$ is only well defined for $z \in \mathcal{B}$, or for $z \in W^u_f(\tilde{f}^i(\tilde{x}))$ for each $i \in \mathbb{Z}$, where $x \in \mathcal{B}$. But the transitivity of $f$ implies that there is a $x \in \mathcal{B}$ with dense orbit. Since $\varphi_1$ is well defined for every point in the orbit of $x$, it can be extended. The same goes for $\varphi_2$, and $\varphi_1$ and $\varphi_2$ satisfy the hypothesis on periodic points due to our hypothesis on Lyapunov exponents. Therefore, it follows from the Livshitz Theorem that
    $$\dfrac{\varphi_1(x)}{\varphi_2(x)} = \dfrac{D^u_F(\overline{x})}{D^u_G(H(\overline{x}))} = \dfrac{P(f(x))}{P(x)},$$
    and, inductively,
    \begin{equation}
        \label{eq:Livshitz}
        \dfrac{P(f^n(x))}{P(x)} = \prod_{i=0}^{n-1} \dfrac{D^u_F(F^i(\overline{x}))}{D^u_G(H(F^i(\overline{x})))} \mbox{ for all } x \in \mathbb{T}^2 \mbox{ and } n \in \mathbb{N}.
    \end{equation}

    Given $x, y \in W^u_f(\tilde{q})$, let $N \in \mathbb{N}$ be the smallest $n$ such that $d^u_f(f^n(x), f^n(y)) \geq 1$. Then $d^u_g(h(f^n(x)), h(f^n(y))) \leq C d^u_f(f^n(x), f^n(y))$.

    By the property (\ref{eq:tilde-d-B2}) of the distance $\tilde{d}_f$ we have that
    $$\tilde{d}_f(x,y) = \dfrac{\tilde{d}_f(f^N(x), f^N(y))}{\prod\limits_{i=0}^{N-1} D^u_F(F^i(\overline{x}))},$$
    and an analogous equality is valid for $\tilde{d}_g$, therefore

    $$\dfrac{\tilde{d}_g(h(x),h(y))}{\tilde{d}_f(x,y)} = \prod\limits_{i=0}^{N-1} \dfrac{D^u_F(F^i(\overline{x}))}{D^u_G(H(F^i(\overline{x})))} \dfrac{\tilde{d}_g(h(f^N(x)), h(f^N(y)))}{\tilde{d}_f(f^N(x), f^N(y))}.$$

    The first term of this product is equal to $\dfrac{P(f^n(x))}{P(x)}$, which is bounded, and the second one is bounded by the Lipschitz constant for distant points given by the inequality (\ref{eq:lips-distgrande}) but for the equivalent metric $\tilde{d}_f$.
\end{proof}

Since $h$ is Lipschitz along $W^u_f(\tilde{x})$ for $x$ in $\mathcal{B}$, then $h$ is \emph{u-differentiable} (differentiable along unstable leaves) for almost every point with respect to the Lebesgue measure induced on the leaves. If $h$ is u-differentiable for $x$, then it is for $f^k(x)$, $k \in \mathbb{N}$, since $f$ is $C^k$ and the unstable leaves are $f$-invariant, and the same goes for $H$ and $F^k(\overline{x})$, $k \in \mathbb{Z}$.

\begin{lemma}
    \label{lem:teo2-2}
    If $h$ is $u$-differentiable at $x \in W^u_B(q)$, then it is u-differentiable at every $y \in W^s_B(x)$.
\end{lemma}

\begin{proof}
    This proof uses the same idea as Step 1 on Lemma 5 in \cite{gogolev22c}, which is to estimate the derivative of a point using a nearby u-differentiable point and the unstable leaves. 

    Given $y \in W^s_\mathcal{B}(x)$, for each $n \in \mathbb{N}$ we fix a $y_n \in W^u_F(F^n(\overline{y}))$ close to $F^n(\overline{y})$. We have that $\tilde{d}_f$ and $\tilde{d}_g$ are uniformly continuous, $P$ is Hölder continuous and $H$ is Lipschitz. Then for each small $\varepsilon > 0$ there is $\delta >0$ independent of $n$ such that $\zeta \in B(F^n(\overline{y}), \delta)$ implies
    \begin{equation}
        \label{eq:P-a}
        \vert P(p(\zeta)) - P(f^n(y)) \vert < \varepsilon
    \end{equation}
    and
    there is $w \in W^u_F(\zeta)$ such that $\tilde{d}_f(\zeta,w) = \tilde{d}_f(F^n(\overline{y}), y_n)$, $w$ has the same orientation as $y_n$, $w$ belongs to a small neighborhood of $y_n$ and
    \begin{equation}
        \label{eq:P-b}
        \vert \tilde{d}_g(H(F^n(\overline{y})),H(y_n)) - \tilde{d}_g(H(\zeta),H(w)) \vert < \varepsilon.
    \end{equation}

    \begin{figure}[ht]
        \centering
        \def\svgwidth{.7\linewidth}
        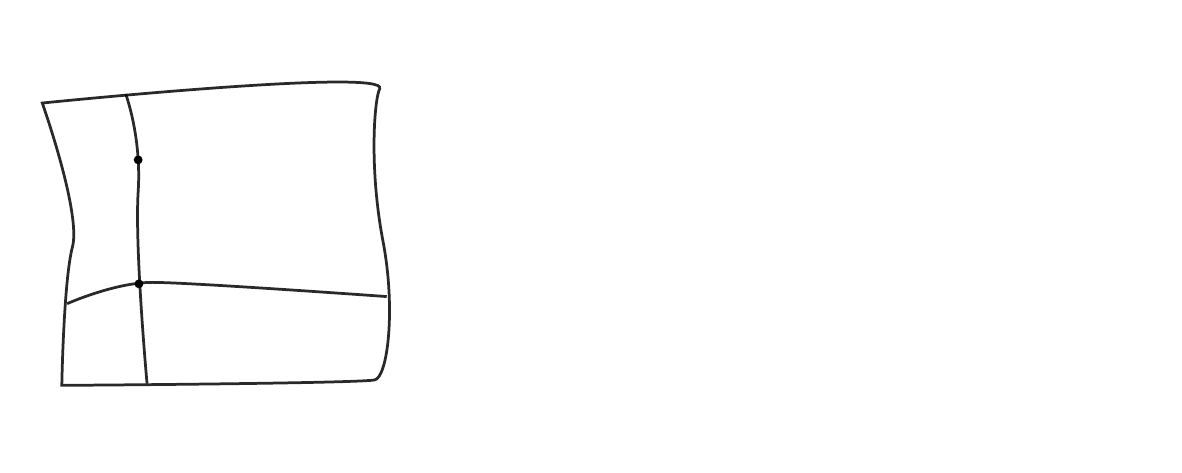
        \caption{We estimate the u-derivative of $H$ at $F^n(\overline{y})$ using the one of $H$ at $F^n(\overline{x})$.}
        \label{fig:lemma3}
    \end{figure}

    Since $y \in W^s_\mathcal{B}(x)$, $d(F^k(\overline{y}), F^k(\overline{x})) \xrightarrow{k \to \infty} 0$ and we can fix $n \in \mathbb{N}$ such that $F^n(\overline{x}) \in B(F^n(\overline{y}), \delta)$. Then there is $w \in W^u_F(F^n(\overline{x}))$ that satisfies the inequality (\ref{eq:P-b}) above (see Figure \ref{fig:lemma3}). Let $z \in W^u_F(\overline{x})$ be such that $F^n(z) = w$. Then, taking $n$ sufficiently large, we have $\tilde{d}_f(\overline{x}, z)$ small enough so that
    \begin{equation}
        \label{eq:derivada}
        \left\vert \dfrac{\tilde{d}_g(H(\overline{x}), H(z))}{\tilde{d}_f(\overline{x}, z)} - D^u_H(\overline{x})\right\vert < \varepsilon.
    \end{equation}

    Then
    \begin{align*}
        \tilde{d}_g(H(F^N(\overline{y})),H(y_N)) &\stackrel{(\ref{eq:P-b})}{=} \varepsilon_1 + \tilde{d}_g(H(F^n(\overline{x})), H(F^n(z)))\\
        &\stackrel{(\ref{eq:tilde-d-B2})}{=} \varepsilon_1 + \prod\limits_{i=0}^{N-1} D^u_G(H(F^i(\overline{x}))) \; \tilde{d}_g(H(\overline{x}),H(z))\\
        & \stackrel{(\ref{eq:derivada})}{=} \varepsilon_1 + \prod\limits_{i=0}^{N-1} D^u_G(H(F^i(\overline{x}))) \; (D^u_H(\overline{x}) + \varepsilon_2)\tilde{d}_f(\overline{x}, z)\\
        & \stackrel{(\ref{eq:tilde-d-B2})}{=} \varepsilon_1 + \prod\limits_{i=0}^{N-1} D^u_G(H(F^i(\overline{x}))) \; (D^u_H(\overline{x}) + \varepsilon_2) \dfrac{\tilde{d}_f(F^N(\overline{x}), F^N(z))}{\prod\limits_{i=0}^{N-1} D^u_F(F^i(\overline{x}))}\\
        & \stackrel[(\ref{eq:Livshitz})]{(\ref{eq:P-b})}{=} \varepsilon_1 + \dfrac{P(x)}{P(f^N(x))} \; (D^u_H(\overline{x}) + \varepsilon_2) \tilde{d}_f(F^n(\overline{y}), y_n)\\
        &\stackrel{(\ref{eq:P-a})}{=} \varepsilon_1 + \dfrac{P(x)}{P(f^N(y)) + \varepsilon_3} \; (D^u_H(\overline{x}) + \varepsilon_2) \tilde{d}_f(F^n(\overline{y}), y_n),
    \end{align*}
    with $\vert \varepsilon_i \vert < \varepsilon$, $i = 1, 2, 3$. As $\varepsilon \longrightarrow 0$, we have
    $$\dfrac{\tilde{d}_g(H(F^N(\overline{y})),H(y_N))}{\tilde{d}_f(F^n(\overline{y}), y_N)} = \dfrac{P(x)}{P(f^N(y))} D^u_H(\overline{x}),$$
    with the right-hand side not depending on $y_N$, which implies that $H$ is u-differentiable at $F^n(\overline{y})$. Therefore, $H$ is u-differentiable at $\overline{y}$ and $h$ is u-differentiable at $y$.
\end{proof}

Consider $\mathcal{K} := \{x \in \mathcal{B}: h \mbox{ is u-differentiable at } x \}$ and $$\mathcal{K}(q) := \{x \in W^u_\mathcal{B}(q): h \mbox{ is u-differentiable at } x \}.$$ Lemma \ref{lem:teo2-2} implies that $\bigcup\limits_{x \in \mathcal{K}(q)} W^s_\mathcal{B}(x) \subseteq \mathcal{K}$. But $W^u_\mathcal{B}(q)$ is transverse to the foliation $W^s_\mathcal{B}$, which is an absolutely continuous foliation of $\mathcal{B}$. Then, since $\mathcal{K}(q) \subseteq W^u_\mathcal{B}(q)$ has full measure, $\lambda_\mathcal{B}(\mathcal{K}) = 1$, where $\lambda_\mathcal{B}$ is the normalized Lebesgue measure on $\mathcal{B}$. This implies that $\mathcal{K}$ is dense in $\mathcal{B}$.

Note in the proof of Lemma \ref{lem:teo2-2} that we did not use the fact that $y \in W^s_\mathcal{B}(x)$, unless to make sure that there is a u-differentiable point sufficiently close to the orbit of $y$. Since the u-differentiable points make a dense set, we can estimate the u-derivative of any point using nearby u-differentiable points, as was done in the lemma. In particular, if $x \in \mathcal{B}$ is a u-differentiable point for $h$ sufficiently close to $y$, then
$$D^u_H(\overline{y}) = \dfrac{P(x)}{P(y)}D^u_H(\overline{x}).$$

Therefore, $\mathcal{K} = \mathcal{B}$, $h$ is u-differentiable for each point in $\mathcal{B}$ and $D^u_H$ is $C^{1+\alpha}$.

In order to obtain for $h$ the same regularity of $f$, let us see that 
\begin{equation}
    \label{eq:rho}
    \rho_g(h(x),h(y)) = \dfrac{D^u_H(\overline{x})}{D^u_H(\overline{y})}\rho_f(x,y),    
\end{equation}
for $x, y \in W^u_\mathcal{B}(q)$. Consider

$$\tilde{\rho}_g(h(x),h(y)) := \dfrac{D^u_H(\overline{x})}{D^u_H(\overline{y})}\rho_f(x,y).$$

$\tilde{\rho}_g$ satisfies (\ref{eq:rho-A3}) for g, therefore it is equal to $\rho_g$ by uniqueness. Indeed,
\begin{align*}
    \tilde{\rho}_g(g(h(x)),g(h(y))) & = \tilde{\rho}_g(h(f(x)),h(f(y))) = \dfrac{D^u_H(F(\overline{x}))}{D^u_H(F(\overline{y}))}\rho_f(f(x),f(y))\\
    &= \dfrac{D^u_H(F(\overline{x}))}{D^u_H(F(\overline{y}))}\dfrac{D^u_F(\overline{x})}{D^u_F(\overline{y})}\rho_f(x, y) = \dfrac{D^u_{H \circ F}(\overline{x})}{D^u_{H \circ F}(\overline{y})}\rho_f(x, y)\\
    &= \dfrac{D^u_{G \circ H}(\overline{x})}{D^u_{G \circ H}(\overline{y})}\rho_f(x, y) = \dfrac{D^u_G(H(\overline{x}))}{D^u_G(H(\overline{y}))} \dfrac{D^u_{H}(\overline{x})}{D^u_{H}(\overline{y})}\rho_f(x, y)\\
    &= \dfrac{D^u_G(H(\overline{x}))}{D^u_G(H(\overline{y}))}  \tilde{\rho}_g(h(x),h(y)).
\end{align*}

Now, we apply the same argument as \cite[Lemma 2.4]{gogolev2017bootstrap}: by \cite[Lemma 3.7]{micena2020rigidity}, $\rho_g$ and $\rho_f$ are $C^{k-1}$. Therefore, the relation (\ref{eq:rho}) implies that $D^u_H$ is $C^{k-1}$, and $H$ is $C^k$ along $W^u_F$.

The proof for the stable direction is analogous, since we lift the points to the fixed foliated box $\mathcal{B}_{\overline{\xi}}$ to calculate $\rho$ using $F$, which is invertible, we can define $\rho^s_f(x,y)$ for the stable direction analogously by exchanging $F$ with $F^{-1}$. Then we can apply Theorem \ref{teo:journe} for $H$ in $\mathcal{B}_{\overline{\xi}}$ and conclude that $H$ is $C^{k}$.

\section{Proof of Theorem \ref{teo:b}}
\label{sec:proofb}

In this Section, we first prove a simpler formulation of our result, in the form of Theorem \ref{teo:b}. In Subsection \ref{subsec:conserv} we address how the preservation of volume is different in our setting when compared to the invertible case addressed in \cite{varao2018rigidity}. In Subsection \ref{sec:teoc} we explain how we can expand Theorem \ref{teo:b} for a more general setting and the changes in the proof to do so.

\subsection{Proof of Theorem \ref{teo:b}}
\label{subsec:teob}
    
Let $F: \mathbb{R}^2 \to \mathbb{R}^2$ be a lift of $f$ to the universal cover, and $p: x \mapsto [x]$ the canonical projection from $\mathbb{R}^2$ to $\mathbb{T}^2$. Let $H: \mathbb{R}^2 \to \mathbb{R}^2$ be the conjugacy between $F$ and $A$. This conjugacy implies that the stable and unstable foliations of $F$ have global product structure.

We already know that the unstable and stable foliations $W^u_F$ and $W^s_F$ are absolutely continuous by Proposition \ref{prop:F-abs-cont}. So, by requiring the UBD property (promoting the absolute continuity to a uniform formulation) for the unstable foliation, we establish on this theorem that the exponents are constant at each point on $\mathbb{T}^2$.

$F$ has exactly one fixed point by \cite{franks1969anosov}, and we can suppose without loss of generality that $F(0) = 0$. Let $B := W_F^s(0)$ be the stable leaf of $0$ with respect to $F$. Then $B$ is $F$-invariant and the unstable leaves of $F$ intersect $B$ transversely at a unique point. So we can define $p^u_F: \mathbb{R}^2 \to B$ as the projection that takes each point $z$ to $W_F^u(z) \cap B$. We can also define an orientation on each leaf by choosing a ``side'' of $B$ as positive. Assuming that $F$ preserves the orientation on unstable manifolds, or working with $F^2$ instead, consider the \emph{foliated strip}
$$\mathcal{B}_0 := \{y \in \mathbb{R}^2 : d^u(p^u_F(y), y) \leq \delta_0, \; y \in W^{u,+}_F \},$$
where $\delta_0 > 0$ is a constant such that $p(\mathcal{B}_0) = \mathbb{T}^2$. $\mathcal{B}_0$ can be seen as a strip ``above'' $B$ whose projection covers the whole torus. Let $\mathcal{B}_k := F^k(\mathcal{B}_0)$ be the iterates of $\mathcal{B}_0$. Since $B$ is $F$-invariant and it expands along the unstable leaves, $\mathcal{B}_{k-1} \subsetneq \mathcal{B}_{k}$.

Let $m^k := \restr{m}{\mathcal{B}_{k}}$ be the induced volume on the foliated strip $\mathcal{B}_{k}$. Even with the volume of $\mathcal{B}_{k}$ being infinite, we can still consider conditional probability measures on foliated boxes in $\mathcal{B}_{k}$. Indeed, $B$ has a countable base of open sets $\{Y_i\}_{i \in \mathbb{N}}$, and $\mathcal{B}_{k}$ can be divided in sets in the form $(\restr{p^u_F}{\mathcal{B}_{k}})^{-1}(Y_i)$ that are foliated boxes in which we can disintegrate the finite volume. So we can consider $m^{k}_x$ the conditional probability measure in $W_k(x) := W^u_F(x) \cap \mathcal{B}_{k}$ for $m$-almost every point in $\mathcal{B}_k$. Furthermore, this probability measure is unique for almost every leaf.

We immediately have that, since the Jacobian of $f$ is constant, these measures behave nicely under $F$.

\begin{lemma}
    \label{lem:claim1}
    $\dfrac{dm^k_x}{d F^k_*m^0_{F^{-k}(x)}} = 1$.
\end{lemma}
    
    \begin{proof} We have that $Jf$ is constant, and it satisfies $Jf \equiv \sigma = \vert \det A \vert$. Considering that

\begin{itemize}
    \item[a)] $dF^k_* m^0 = \underbrace{\vert \det DF^{-k}(\cdot) \vert}_{JF^{-k}(\cdot)} dm^k$ in $\mathcal{B}_k$;
    \item[b)] $\{m^k_x\}_x$ is a disintegration of $m^k$; 
    \item[c)] $\{F^k_* m_{F^{-k}(x)}^0\}_x$ is a disintegration of $F^k_* m^0$.
\end{itemize}

Then, for any $\varphi: \mathcal{B}_k \to \mathbb{R}$, we have on one hand that
\begin{equation}
    \label{eq:int-phi1}
    \int_{\mathcal{B}_k} \varphi dF^k_* m^0 \stackrel{c)}{=} \int_B \left( \int_{W_k(x)} \varphi(z) dF^k_* m_{F^{-k}(x)}^0(z)  \right) dF^k_* \mu^0(x).
\end{equation}

On the other hand, we have that
\begin{equation}
    \label{eq:int-phi2}
    \int_{\mathcal{B}_k} \varphi dF^k_* m^0 \stackrel{a)}{=} \int_{\mathcal{B}_k} \varphi \; JF^{-k} dm^k \stackrel{b)}{=} 
    \int_B \left( \int_{W_k(x)} \varphi(z) JF^{-k}(z) dm^k_x(z) \right) d\mu^k(x),
\end{equation}
where $\mu_k$ is the transversal measure induced by the partition of $\mathcal{B}_k$ into its leaves. By comparing (\ref{eq:int-phi1}) and (\ref{eq:int-phi2}), we have that
\begin{equation}
    \label{eq:compara-dm}
    dF^k_* m_{F^{-k}(x)}^0(z) \dfrac{dF^k_* \mu^0}{d\mu^k}(x) = JF^{-k}(z) dm^k_x(z).
\end{equation}

The transverse measures $\mu_k$ and $F^k_* \mu^0$ given by the disintegration satisfy
$$\dfrac{dF^k_* \mu^0}{d\mu^k}(x) = \lim_{\varepsilon \to 0} \dfrac{F^k_* \mu^0(I^B_\varepsilon)}{\mu_k(I^B_\varepsilon)} = \lim_{\varepsilon \to 0} \dfrac{m \circ F^{-k}(A_\varepsilon)}{m(A_\varepsilon)},$$
where $I^B_\varepsilon \subseteq B$ is a ball with center $x$ and radius $\varepsilon$ on $B = W^s_F(0)$, and $A_\varepsilon = \bigcup_{y \in I^B_\varepsilon} W_k(y)$. Thus, $JF$ being constant implies that $$\dfrac{dF^k_* \mu^0}{d\mu^k}(x) = \sigma^{-k},$$ which cancels with $JF^{-k}(z)$ on equation (\ref{eq:compara-dm}).
        
    \end{proof}  

The idea of this proof is to construct measures with respect to $F$ on these local leaves of $\mathcal{B}_k$ with densities that decrease as $k$ grows with a rate equal to the unstable Lyapunov exponent of the linearization $A$. This allows us to conclude that the unstable Lyapunov exponents of $F$ and $A$ are the same. More precisely, we want to construct measures $\eta_x$ uniformly equivalent to the volume induced on the global leaves such that $F_*\eta_x = \alpha^{-1} \eta_{F(x)}$, with $\alpha$ being the unstable eigenvalue of $A$.

Consider the measures $\eta_x^k$ defined inductively as

$$\eta_x^0 :=m_x^0 \mbox{ and}$$
$$\eta_x^k := \alpha F_* \eta_{F^{-1}(x)}^{k-1} = \alpha^k F^k_* m_{F^{-k}(x)}^0,$$
where $\alpha$ is the unstable eigenvalue of $A$. Since $\vert \alpha \vert > 1$, $\eta_x^k$ is not a probability. Actually we will see that it is comparable with $\lambda_x$, where $\lambda_x$ is the induced volume along the global leaf $W^u_F(x)$. For this, we use the quasi-isometry to show that the length of $W_k(x)$ grows uniformly with $\alpha$ as we apply $F$.

\begin{lemma}
    \label{lem:lemma1} 
    There is $K > 1$ such that, for each $k \in \mathbb{N}$ and $x \in \mathcal{B}_0$, we have that
    $$K^{-1} \leq \dfrac{\lambda_{F^k(x)}(W_k(F^k(x)))}{\alpha^k} \leq K.$$
\end{lemma}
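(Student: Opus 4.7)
The plan is to first identify the quantity being estimated, then transfer it to the linearization $A$ via the conjugacy $H$, and finally close up with a uniform constant. For the first step, I observe that since $B = W^s_F(0)$ is $F$-invariant and the unstable foliation is $F$-equivariant, the segment $W_k(F^k(x))$ equals $F^k(W_0(x))$, where $W_0(x)$ runs along $W^u_F(x)$ from $y_1 := p^u_F(x) \in B$ to the point $y_2$ at unstable distance $\delta_0$ in the positive direction; hence $W_0(x)$ has unstable length exactly $\delta_0$. Setting $z_i^k := F^k(y_i)$ and $\ell_k := d^u(z_1^k, z_2^k)$, the quasi-isometry of $W^u_F$ (Proposition \ref{prop:F-quasi-iso} together with the remark following it for non-special $f$) yields a constant $Q \geq 1$ with $\|z_2^k - z_1^k\| \leq \ell_k \leq Q\|z_2^k - z_1^k\|$, so it suffices to control $\|z_2^k - z_1^k\|$ uniformly in $x$ and $k$.

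The key step is to use $H \circ F^k = A^k \circ H$. Since $H$ maps each unstable leaf of $F$ to an unstable line of $A$, the vector $v := H(y_2) - H(y_1)$ lies in the (one-dimensional) unstable eigenspace of $A$, so $\|H(z_2^k) - H(z_1^k)\| = \|A^k v\| = \alpha^k \|v\|$ (taking $\alpha > 0$; otherwise we pass to $F^2$ as in the setup). Using that $H$ is uniformly close to the identity from Aoki--Hiraide, $\|H - \mathrm{Id}\|_\infty \leq C_0$, we obtain $\bigl|\|z_2^k - z_1^k\| - \alpha^k \|v\|\bigr| \leq 2C_0$. The proof then reduces to bounding $\|v\|$ between two positive constants independent of $x$. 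The upper bound $\|v\| \leq \delta_0 + 2C_0$ is immediate; the lower bound $\|v\| \geq \delta_1$ for some $\delta_1 > 0$ follows from $\|y_2 - y_1\| \geq \delta_0/Q$ (by quasi-isometry) combined with the uniform continuity of $H^{-1}$, also supplied by Aoki--Hiraide.

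Assembling these bounds gives $\alpha^k \delta_1 - 2C_0 \leq \ell_k \leq Q\bigl(\alpha^k(\delta_0 + 2C_0) + 2C_0\bigr)$, which after dividing by $\alpha^k$ yields the required two-sided estimate uniformly for all $k$ sufficiently large, say $k \geq k_0$. For the finitely many small values $k < k_0$ I will argue directly: writing $\ell_k(x) = \int_{W_0(x)} |D^u F^k|\, d\lambda$, the Anosov expansion along the unstable direction gives a positive lower bound on $|D^u F^k|$, while the $\mathbb{Z}^2$-periodicity of $DF$ provides a uniform upper bound on $\|DF^k\|$, so $\ell_k/\alpha^k$ is bounded above and below uniformly in $x$ for each such $k$; taking $K$ to be the maximum of all constants produced yields the lemma. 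I expect the main subtlety to be the lower bound on $\|v\|$: because $y_1$ ranges over the non-compact stable leaf $B$, one cannot invoke compactness directly, and the uniform lower bound must be extracted from the global uniform continuity of $H^{-1}$ combined with the quasi-isometry estimate on $\|y_2 - y_1\|$.
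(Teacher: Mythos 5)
Your proof is correct and follows essentially the same route as the paper: quasi-isometry reduces the leaf length to the Euclidean distance between the endpoints, which is then transferred to $A$ via $H\circ F^k = A^k\circ H$, the uniform bound on $\|H-\mathrm{Id}\|$, and the fact that $H$ carries unstable $F$-leaves into unstable $A$-lines so that $A^k$ acts by the scalar $\alpha^k$. The paper dispatches the lower bound as ``analogous,'' while you make it explicit via the uniform continuity of $H^{-1}$ together with a separate check for the finitely many small $k$; this is a sound completion of what the paper leaves implicit rather than a genuinely different argument.
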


\begin{proof}
    Due to the quasi-isometry, to estimate $\lambda_{F^k(x)}(W_k(F^k(x)))$ it suffices to estimate $\Vert a_k - b_k \Vert$, where $a_k, b_k$ are the extreme points of $W_k(F^k(x))$. But $a_k = F^k(a_0)$ and $b_k = F^k(b_0)$, with $a_0$ and $b_0$ being the extreme points of $W_0(x)$. Then
    \begin{align*}
        \Vert F^k(a_0) - F^k(b_0) \Vert & \leq \Vert F^k(a_0) - H \circ F^k(a_0) \Vert + \Vert H \circ F^k(a_0) - H \circ F^k(b_0) \Vert\\
        &+ \Vert H \circ F^k(b_0) - F^k(b_0)\Vert \leq 2\delta + \Vert A^k \circ H(a_0) - A^k \circ H(b_0) \Vert,
    \end{align*}
    where $H(a_0), H(b_0)$ are on the same unstable line for $A$, and $\delta = d(Id, H) > 0$.
    
    Since
    \begin{align*}
        \Vert A^k \circ H(a_0) - A^k \circ H(b_0) \Vert &= \alpha^k \Vert H(a_0) - H(b_0) \Vert\\
        &\leq \alpha^k (\Vert a_0 - b_0 \Vert +2 \delta) \leq \alpha^k (\lambda_x(W_0(x)) + 2\delta),
    \end{align*}
    the upper bound follows because $\lambda_x(W_0(x)) = \gamma_0$, and the lower bound is analogous.
\end{proof}

By the multiplicative ergodic theorem for endomorphisms (see e. g. \cite{qian2009smooth}), the unstable Lyapunov exponent of $f$ on a given point does not depend on the orbit that defines the unstable direction, so we can obtain it at every point in $\mathcal{B}_0$, since $p(\mathcal{B}_0) = \mathbb{T}^2$, $p$ is a local isometry and the projection of unstable leaves of $F$ are unstable leaves of $f$.

The following calculations and estimations are for points $x \in \mathcal{B}_0$ such that $m^k_x$ is well defined for all $k \geq 0$. We would like to construct the measures $\eta_x$ as the limit of measures $\eta^k_x$ for a given $x \in \mathcal{B}_0$. For that, we need $\eta^k_x$ to be well defined for every large enough $k$, which will only be possible for a full volume set on the foliated strip $\mathcal{B}_0$. Indeed, for every $k \geq 0$, there is a full volume set $A_k \subseteq \mathcal{B}_k$ such that $m^k_x$ is well defined for each $x \in A_k$. Consider $A := \bigcap_{k \geq 0} A_k$ and $D := \bigcap_{n \in \mathbb{N}} F^n(A)$. $D$ is $F$-invariant, has full measure in $\mathcal{B}_0$ and $m^k_x$ is well defined for each $x \in D$ and $k \geq 0$.

Now, we construct $\eta_x$ for $x \in D$, and later, using the density of $D$, we construct other measures to compute the Lyapunov exponents for each point in $\mathcal{B}_0$.

\begin{lemma}
    \label{lem:lemma2}
    For m-almost every $x \in \mathcal{B}_0$ there is a measure $\eta_x$ on $W^u_F(x)$ such that $F_* \eta_x = \alpha^{-1} \eta_{F(x)}$ and $\eta_x = \rho_x \lambda_x$ with $\rho_x$ uniformly bounded.
\end{lemma}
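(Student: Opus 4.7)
The plan is to show that $\eta^k_x$ has a density $\varrho^k_x$ with respect to $\lambda_x$ that is uniformly bounded in both $k$ and $x$, and then to pass to a weak-$*$ limit. The invariance $F_*\eta_x = \alpha^{-1}\eta_{F(x)}$ will already hold at each finite level and survive the limit.

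First, I would use the constant-Jacobian hypothesis to identify $\eta^k_x$ with $\alpha^k m^k_x$. Since $F\colon\mathcal{B}_k\to\mathcal{B}_{k+1}$ is a diffeomorphism with $|\det DF|$ equal to a constant $J_0$, one has $F_*(m|_{\mathcal{B}_k}) = J_0^{-1}\,m|_{\mathcal{B}_{k+1}}$. Because conditional probabilities do not change under a multiplicative rescaling of the reference measure, this yields $F_* m^k_x = m^{k+1}_{F(x)}$. Iterating, $F^k_*\,m^0_{F^{-k}(x)} = m^k_x$, so $\eta^k_x = \alpha^k m^k_x$ and consequently $F_*\eta^k_x = \alpha^{-1}\eta^{k+1}_{F(x)}$ at each finite level.

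Next, I would prove the uniform density bound. By the UBD hypothesis applied to the foliated boxes $(p^u_F|_{\mathcal{B}_k})^{-1}(Y_i)$ used in the disintegration, $m^k_x = g^k_x\,\hat\lambda^k_x$ with $C^{-1}\le g^k_x\le C$. Writing $\hat\lambda^k_x = \lambda_x|_{W_k(x)}/\gamma_k(x)$ and invoking Lemma~\ref{lem:lemma1} to bound $\alpha^k/\gamma_k(x)$ between $K^{-1}$ and $K$, the density $\varrho^k_x := d\eta^k_x/d\lambda_x = \alpha^k g^k_x/\gamma_k(x)$ is confined to $[K^{-1}C^{-1},KC]$ on $W_k(x)$, independently of $k$ and $x$.

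Finally, I would pass to a weak-$*$ limit. Extended by zero off $W_k(x)$, each $\varrho^k_x$ lies in the weak-$*$ compact ball of radius $KC$ in $L^\infty(W^u_F(x),\lambda_x)$. A diagonal extraction over a countable dense subset of $D$ that is closed under $F$, $F^{-1}$, and integer shifts of the index produces a subsequence $k_n\to\infty$ along which $\eta^{k_n}_x$ and $\eta^{k_n+1}_x$ both converge weakly-$*$ for every $x$ in that set. Calling the limit $\eta_x$, the density bounds are preserved by testing against indicator functions of compact subsets of $W^u_F(x)$, and the finite-level relation passes to the limit by weak-$*$ continuity of pushforward under the diffeomorphism $F$, giving $F_*\eta_x = \alpha^{-1}\eta_{F(x)}$. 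The main obstacle is precisely this bookkeeping: one must arrange the extraction so that $\eta^{k_n}_x$ and $\eta^{k_n+1}_{F(x)}$ converge along a common subsequence, and then extend $\eta_x$ to a measurable family on the full measure set $D$ in a way compatible with the invariance.
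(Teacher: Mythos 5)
Your proposal is correct and follows the same overall strategy as the paper: establish $F^k_*m^0_{F^{-k}(x)} = m^k_x$ from the constant-Jacobian hypothesis, combine the UBD property with Lemma \ref{lem:lemma1} to get the two-sided density bound $C^{-1}K^{-1}\le d\eta^k_x/d\lambda_x\le CK$, and then pass to a weak-$*$ limit by a diagonal extraction. The one genuine difference is in how the identity $F^k_*m^0_{F^{-k}(x)} = m^k_x$ is established. The paper's Claim 1 writes $dF^k_*m^0 = JF^{-k}\,dm^k$, disintegrates both sides, compares the two resulting integral formulas to obtain $dF^k_*m^0_{F^{-k}(x)}\cdot\frac{dF^k_*\mu^0}{d\mu^k}(x) = JF^{-k}(z)\,dm^k_x(z)$, and finally invokes constancy of $Jf$ to show that the transversal Radon--Nikodym factor $\sigma^{-k}$ cancels the leafwise factor $JF^{-k}(z)$. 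You instead note at the outset that constant Jacobian gives $F_*(m|_{\mathcal{B}_k}) = J_0^{-1}m|_{\mathcal{B}_{k+1}}$ and that conditional \emph{probability} measures are insensitive to a global constant rescaling of the reference measure, so the identity follows from uniqueness of disintegration in one line. Your shortcut is cleaner and equivalent under the constant-Jacobian hypothesis; the paper's longer computation has the small advantage of isolating exactly where constancy of $Jf$ is used, which anticipates how it is relaxed to quasi preservation of densities in Theorem \ref{teo:c}. Both your treatment and the paper's gloss over the same delicacy in the diagonal extraction, namely that one must make $\eta^{k_n}_x$ and $\eta^{k_n+1}_{F(x)}$ converge simultaneously while keeping the definition consistent along $F$-orbits; you at least flag this explicitly as the remaining bookkeeping.
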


\begin{proof}
    By definition, $F_* \eta^k_x = \alpha^{-1} \eta^{k+1}_{F(x)}$. Hence, it suffices to show that the sequence $\{\eta^k_x\}_{k \geq 0}$ has an accumulation point $\eta_x$ such that $\eta^{k+1}_{F(x)}$ converges under the same subsequence to a measure $\eta_{F(x)}$. Firstly, we show that $\eta^k_x$ is uniformly equivalent to the induced volume on the leaf; this will guarantee the existence of accumulation points to $\{\eta^k_x\}_{k \geq 0}$ and the uniformity of $\rho_x$ follows.
    
    Essentially, we have uniform equivalence with uniform constants (not depending on $x$ or $k$) between the following measures
    \begin{enumerate}
        \item $m^k_x \stackrel{u}{\sim} \hat{\lambda}^k_x$ by the UBD property;
        \item $m^k_x \stackrel{u}{\sim} F^k_*m^0_{F^{-k}(x)}$ with uniform constant $1$, since $Jf$ is constant, by Lemma \ref{lem:claim1};
        \item $F^k_*m^0_{F^{-k}(x)} \stackrel{u}{\sim} \hat{\lambda}^k_x$ by items 1 and 2;
        \item $\eta^k_x = \alpha^kF^k_*m^0_{F^{-k}(x)} \stackrel{u}{\sim} \alpha^k \hat{\lambda}^k_x$.
    \end{enumerate}
    By Lemma \ref{lem:lemma1}, $\alpha^k \hat{\lambda}^k_x$ is uniformly equivalent to $\lambda_x$, the induced volume on the global leaf $W^u_F(x)$. Let us now formalize these ideas.
    
    For all $k \in \mathbb{N}$, since the unstable foliation of $F$ has the UBD property, then there is $C > 1$ such that
    \begin{equation}
    \label{eq:UBD}
       C^{-1} \leq \dfrac{dm^k_x}{d \hat{\lambda}^k_x} \leq C.
    \end{equation}

    Using Lemma \ref{lem:claim1}, we multiply the above inequality by $1 \equiv \dfrac{d F^k_*m^0_{F^{-k}(x)}}{dm^k_x}$, and
    
    $$C^{-1} \leq \dfrac{d F^k_*m^0_{F^{-k}(x)}}{d \hat{\lambda}^k_x} \leq C.$$
    By multiplying by $\alpha^k$, we have that
    
    $$C^{-1} \alpha^k d \hat{\lambda}^k_x \leq d \eta^k_x \leq C \alpha^k d \hat{\lambda}^k_x .$$
    
    By Lemma \ref{lem:lemma1},
    $$K^{-1} d \lambda_x \leq \alpha^k d \hat{\lambda}^k_x = \alpha^k \dfrac{d \lambda_x}{\lambda(W_K(x))} \leq K d \lambda_x .$$
    This implies that the densities $\{\eta^k_x\}_{k >0}$ are bounded with
    \begin{equation}
        \label{eq:bound-eta}
        C^{-1}K^{-1}d \lambda_x \leq d \eta^k_x \leq CK d \lambda_x,
    \end{equation}
    then they have an accumulation point. Let $k_i$ index a subsequence such that $\eta_x := \lim_i \eta^{k_i}_x$. The sequence $\{\eta^{k_i+1}_{F(x)}\}_i$ has an accumulation point for a subsequence with indices that also make $\{\eta^k_x\}_{k>j}$ converge. By a diagonal argument, we get $\eta_x$ as desired.
\end{proof}

In order to obtain the Lyapunov exponents, we would like to have the measures $\eta_x$ defined at every point in $\mathcal{B}_0$, but we only have them on a full volume set. To overcome this, given $z \in \mathcal{B}_0$, we define measures $\mathfrak{m}_z$ and $\mathfrak{M}_z$ using the stable holonomy $h^s$ that carries points from a local unstable leaf of $F$ to other by traveling on stable leaves. Let $(z_n)_n$ be a sequence of points in $\mathcal{B}_0$ such that $\eta_{z_n}$ exists and $z_n \xrightarrow{n \to \infty} z$. Let $\mathcal{I}_z$ be the set of connected intervals on $W^u_F(z)$. Define for all $I \in \mathcal{I}_z$
$$\mathfrak{m}_z(I) := \liminf_n \dfrac{1}{n} \sum_{i=0}^{n-1} \eta_{z_i}(h^s(I)),$$
$$\mathfrak{M}_z(I) := \limsup_n \dfrac{1}{n} \sum_{i=0}^{n-1} \eta_{z_i}(h^s(I)).$$

Let us check that these measures are uniformly equivalent to the volume induced on the global leaves and are well behaved with respect to $F$, as are the measures $\eta_x$. There is a constant $\gamma$ independent of the choice of $z$ and $(z_n)_n$ such that

\begin{equation}
    \label{eq:lemma4}
    \gamma^{-1}\lambda_z(I) \leq \mathfrak{m}_z(I) \leq \mathfrak{M}_z(I) \leq \gamma\lambda_z(I),
\end{equation}
for all $I \in \mathcal{I}_z$ small enough. Indeed, by Lemma \ref{lem:lemma2},
$$\mathfrak{m}_z(I) = \liminf_n \dfrac{1}{n} \sum_{i=0}^{n-1} \rho_{z_i}\lambda_{z_i}(h^s(I)) \geq C^{-2}K^{-1}\liminf_n \dfrac{1}{n} \sum_{i=0}^{n-1} \lambda_{z_i}(h^s(I)).$$

By Proposition \ref{prop:F-C1-hol}, holonomies are $C^1$, and, for $I$ small enough, $\lambda_{z_i}(h^s(I))$ is uniformly close to $\lambda_z(I)$. Then $\mathfrak{m}_z(I) \geq \gamma^{-1}\lambda_z(I)$. $\mathfrak{M}_z(I) \leq \gamma\lambda_z(I)$ follows analogously.

For all $I \in \mathcal{I}_{F^k(z)}$ small enough
\begin{equation}
    \label{eq:lemma5}
    \begin{aligned}
        F_*^k\mathfrak{m}_z(I) &= \alpha^{-k}\mathfrak{m}_{F^k(z)}(I),\\
        F_*^k\mathfrak{M}_z(I) &= \alpha^{-k}\mathfrak{M}_{F^k(z)}(I).
    \end{aligned}
\end{equation}
Indeed, since the holonomies are $F$-invariant and by Lemma \ref{lem:lemma2}:
\begin{align*}
    F_*^k\mathfrak{m}_z(I) &= \mathfrak{m}_z(F^{-k}(I)) = \liminf_n \dfrac{1}{n} \sum_{i=0}^{n-1} \eta_{z_i}(h^s(F^{-k}(I))) = \liminf_n \dfrac{1}{n} \sum_{i=0}^{n-1} \eta_{z_i}(F^{-k}(h^s(I)))\\
    &= \liminf_n \dfrac{1}{n} \sum_{i=0}^{n-1} \alpha^{-k}\eta_{F^k(z_i)}(h^s(I)) = \alpha^{-k}\mathfrak{m}_{F^k(z)}(I).
\end{align*}
And for $\mathfrak{M}$ it is analogous.

The relations (\ref{eq:lemma4}) and (\ref{eq:lemma5}) imply that the unstable Lyapunov exponent of $F$ is $\log(\alpha)$. Indeed, for $n \in \mathbb{N}$
$$F_*^n(\gamma^{-1}\lambda_z)(I) \stackrel{(\ref{eq:lemma4})}{\leq} F_*^n(\mathfrak{m}_z)(I) \stackrel{(\ref{eq:lemma5})}{=} \alpha^{-n}\mathfrak{m}_{F^n(z)}(I) \mbox{ and}$$
$$F_*^n(\gamma\lambda_z)(I) \stackrel{(\ref{eq:lemma4})}{\geq} F_*^n(\mathfrak{M}_z)(I) \stackrel{(\ref{eq:lemma5})}{=} \alpha^{-n}\mathfrak{M}_{F^n(z)}(I),$$
therefore

\begin{equation}
    \label{eq:ineq2}
    \gamma^{-1} F_*^n(\lambda_z)(I) \leq \alpha^{-n}\mathfrak{m}_{F^n(z)}(I) \leq \alpha^{-n}\mathfrak{M}_{F^n(z)}(I) \leq \gamma F_*^n(\lambda_z)(I).
\end{equation}

Dividing by $F_*^n(\lambda_z)(I)$ and shrinking the interval $I$, we get the derivative of $F^{-n}$ at the unstable direction. Indeed, if $x \in I$, let $I_\varepsilon := \{y \in W^u_F(F^n(z)): d^u(x,y) < \varepsilon \}$ be the open ball around $x$ on the unstable leaf of $F^n(z)$. Then, since by (\ref{eq:lemma4}) $\mathfrak{m}_{F^n(z)}$ is uniformly bounded with respect to the Lebesgue measure, $\mathfrak{m}_{F^n(z)} = \tau_{F^n(z)}\lambda_{F^n(z)}$ with $\tau_{F^n(z)} \in [\beta^{-1}, \beta]$, so

\begin{align*}
    &\alpha^{-n}\dfrac{\int_{I_\varepsilon}\beta^{-1} d \lambda_{F^n(z)}(\xi)}{\lambda_z(F^{-n}(I_\varepsilon))} \leq \alpha^{-n}\dfrac{\mathfrak{m}_{F^n(z)}(I_\varepsilon)}{F_*^n(\lambda_z)(I_\varepsilon)}\\
    &= \alpha^{-n}\dfrac{\int_{I_\varepsilon}\tau_{F^n(z)}(\xi) d \lambda_{F^n(z)}(\xi)}{\lambda_z(F^{-n}(I_\varepsilon))} \leq \alpha^{-n}\dfrac{\int_{I_\varepsilon}\beta d \lambda_{F^n(z)}(\xi)}{\lambda_z(F^{-n}(I_\varepsilon))},    
\end{align*}
and taking the limit as $\varepsilon \rightarrow 0$

\begin{equation*}
    \alpha^{-n}\beta^{-1}\left\Vert \restr{DF^{-n}}{E^u_F}(x) \right\Vert^{-1} \leq \lim_{\varepsilon \rightarrow 0} \dfrac{\alpha^{-n} \mathfrak{m}_{F^n(z)}(I_\varepsilon)}{\lambda_z(F^{-n}(I_\varepsilon))} \leq \alpha^{-n}\beta \left\Vert \restr{DF^{-n}}{E^u_F}(x) \right\Vert^{-1}.
\end{equation*}

Now by applying $\dfrac{1}{n} \log$ and taking the limit as $n \rightarrow \infty$, we have
$$-\log(\alpha) - \lambda^u_{F^{-1}}(x) \leq 0 \leq -\log(\alpha) - \lambda^u_{F^{-1}}(x),$$
where in the central expression we use the inequality (\ref{eq:ineq2}) to bound $\alpha^{-n} \dfrac{\mathfrak{m}_{F^n(z)}(I_\varepsilon)}{\lambda_z(F^{-n}(I_\varepsilon))}$.

Therefore, $\lambda_{F}^u(x) \equiv  \lambda_{A}^u$ for all $x \in \mathcal{B}_0$. Since $p: \mathbb{R}^2 \to \mathbb{T}^2$ is a local isometry, $\lambda_{f}^u(x) \equiv  \lambda_{A}^u$ for all $x \in \mathbb{T}^2$. By the definition of Lyapunov exponent and the Birkhoff ergodic theorem,

$$\lambda^{u}_f + \lambda^{s}_f = \int_{\mathbb{T}^2} \log Jf dm = \log \sigma,$$ 
where $\sigma \equiv Jf = \vert \det A \vert$, then $\lambda_{f}^s(x) \equiv  \lambda_{A}^s$ for all $x \in \mathbb{T}^2$.

\subsection{Conservativeness}
\label{subsec:conserv}

Varão in \cite{varao2018rigidity} shows a similar result to Theorem \ref{teo:b} for conservative partial hyperbolic diffeomorphisms on $\mathbb{T}^3$. A generalization to our context, for hyperbolic endomorphisms on $\mathbb{T}^2$, requires some clarification on the role of conservativeness in the proof, so we recall this concept briefly.

A \textit{conservative} endomorphism on a Riemann manifold $M$ is a map that preserves the volume $m$. That is, $f \in C^1(M, M)$ is \textit{conservative} if $m(f^{-1}(A)) = m(A)$ for any measurable $A$. If $f$ is a diffeomorphism, then conservativeness is equivalent to $m(f(A)) = m(A)$, and by the change of variables formula, we have
\begin{equation}
    \label{eq:ch-var}
    m(f(A)) = \int_A \vert \det Df \vert \; dm,
\end{equation}
thus conservativeness is equivalent to the Jacobian $Jf = \vert \det Df \vert$ being constant equal to 1.

For endomorphisms, however, the formula (\ref{eq:ch-var}) only holds for small injectivity domains, and $m(f(A)) = m(A)$ is not equivalent to conservativeness, neither is the condition $Jf \equiv \mbox{constant}$. What we actually have for conservative endomorphisms is that
$$\sum_{x \in f^{-1}(\{y\})} \dfrac{1}{\vert \det Df(x) \vert} = 1.$$

In the case that $M$ is compact, conservativeness implies that there is $M > 1$ such that $1 \leq \vert \det Df(x) \vert \leq M$, and this condition is preserved when we lift the map to the universal cover.

Anosov endomorphisms, even if they are conservative, are volume expanding. Take for instance $f(x) = 2x \mod 1$ in $S^1$ or a linear toral endomorphism, that have constant Jacobian equal to the degree of the map, thus being conservative. Their lifts to the universal cover are not conservative.

\subsection{Theorem C}
\label{sec:teoc}

In our strategy to prove Theorem \ref{teo:b}, we need four different measures defined on the same local leaf to be equivalent, with Radon--Nikodym derivatives uniformly bounded ($m^k_x$, $\hat{\lambda}^k_x$, $F^k_*m^0_{F^{-k}(x)}$ and $F^k_*\hat{\lambda}^0_{F^{-k}(x)}$). Some of these equivalences is guaranteed by the UBD property, while others are a consequence of $Jf \equiv \mbox{constant}$. To obtain some of these equivalences, we introduce a weaker hypothesis than requiring $Jf$ to be constant. We define this condition as follows, and its role is better addressed during the proof.

\begin{definition}
    \label{def:hyp-c}
    For an $f$-invariant foliation, we say that $f$ \textit{quasi preserves densities along the foliation} if there is a constant $C >1$ such that, for each local leaf $\mathcal{W}$, and each $k \in \mathbb{N}$, 

    \begin{equation*}
        \label{eq:hyp-C}
        C^{-1} \leq \dfrac{d \hat{\lambda}^k_{f^{k}(x)}}{d f^k_*\hat{\lambda}^0_x} \leq C,
    \end{equation*}
    where $\hat{\lambda}^k_x$ is the normalized volume on $f^k(\mathcal{W})$.
\end{definition}

The constant $C$ in the definition does not depend on the leaf neither on $k$, that is, $\hat{\lambda}^k_{f^{k}(x)} \stackrel{u}{\sim}f^k_*\hat{\lambda}^0_x$ with uniform constant $C$ for the family of local measures $\hat{\lambda}^k_{f^{k}(x)}$, under the notation established after Definition \ref{def:UBD}.

Essentially, this hypothesis says that, by iterating with $f$, the densities of the induced volume are not distorted too much. That means, in the hyperbolic case, that the expansion/contraction seen on the leaf is ``well distributed'' along the leaf.

Quasi preservation of densities is more general than constant Jacobian, thus it is more general than conservativeness in the invertible setting. In the non invertible case, conservativeness is more general than constant Jacobian, and we do not know if there is a relation between conservativeness and quasi preservation of densities.

\begin{proof}[Proof of Theorem \ref{teo:c}]
We use the hypotheses on the unstable and stable foliations to prove that $\lambda_f^{u} \equiv \lambda_A^{u}$, and with an analogous argument we see that $\lambda_f^{s} \equiv \lambda_A^{s}$ as in Theorem \ref{teo:b}. The only difference from the proof of Theorem \ref{teo:b} is during Lemma \ref{lem:lemma2}, in which, instead of obtaining on Lemma \ref{lem:claim1} that

$$\dfrac{dm^k_x}{d F^k_*m^0_{F^{-k}(x)}} = 1,$$
we simply use the fact that quasi preservation of densities gives us
$$ C^{-1} \leq \dfrac{\hat{\lambda}^k_x}{d F^k_*\hat{\lambda}^0_{F^{-k}(x)}} \leq C,$$
which imply
\begin{equation}
    \label{eq:claim-qpd}
    C^{-3} \leq \dfrac{d F^k_*m^0_{F^{-k}(x)}}{d m^k_x} = \dfrac{d F^k_*m^0_{F^{-k}(x)}}{d F^k_*\hat{\lambda}^0_{F^{-k}(x)}} \dfrac{d F^k_*\hat{\lambda}^0_{F^{-k}(x)}}{d \hat{\lambda}^k_x} \dfrac{d \hat{\lambda}^k_x}{d m^k_x} \leq C^3,
\end{equation}
or $F^k_*m^0_{F^{-k}(x)} \stackrel{u}{\sim} m^k_x$ with uniform constant $C^3$. Indeed, the UBD property gives us that $m^k_x \stackrel{u}{\sim} \hat{\lambda}^k_x$ and $F^k_*m^0_{F^{-k}(x)} \stackrel{u}{\sim} F^k_*\hat{\lambda}^0_{F^{-k}(x)}$ both with uniform constant $C > 1$, and quasi preservation of densities imply $F^k_*\hat{\lambda}^0_{F^{-k}(x)} \stackrel{u}{\sim} \hat{\lambda}^k_x$ with the same uniform constant $C$, without loss of generality. Thus equation (\ref{eq:claim-qpd}) holds.

Then, the inequality (\ref{eq:bound-eta}) in this case is
$$C^{-4}K^{-1}d \lambda_x \leq d \eta^k_x \leq C^4K d \lambda_x,$$
and the rest of the proof is the same.

\end{proof}

\section*{Data availability}
No new data were created or analyzed in this study

\section*{Acknowledgements}
The authors would like to thank Boris Hasselblatt and Koichi Hiraide for kindly clarifying some points for Section \ref{sec:preli}. We also thank the anonymous referees for the careful reading and valuable comments. M. C. was partially financed by the Coordenação de Aperfeiçoamento de Pessoal de Nível Superior - Brasil (CAPES) - grant 88882.333632/2019-01 and the Fundação Carlos Chagas Filho de Amparo à Pesquisa of the State of Rio de Janeiro (FAPERJ) E-26/202.014/2022. R. V. was partially financed by CNPq and Fapesp grants 18/13481-0 and 17/06463-3.

\bibliographystyle{plain}
\bibliography{references}

\end{document}